\title{Random Artin groups}
	\author[Antoine Goldsborough]{Antoine Goldsborough}
	\address{Department of Mathematics, Heriot-Watt University, Edinburgh, UK}
	\email{ag2017@hw.ac.uk}
\author[Nicolas Vaskou]{Nicolas Vaskou}
	\address{Department of Mathematics, Heriot-Watt University, Edinburgh, UK}
	\email{ncv1@hw.ac.uk}
\theoremstyle{plain}
\newtheorem{lemma}{Lemma}[section]
\newtheorem{proposition}[lemma]{Proposition}
\newtheorem{corollary}[lemma]{Corollary}
\newtheorem{theorem}[lemma]{Theorem}
\newtheorem{problem}[lemma]{Question}
\newtheorem{remark}[lemma]{Remark}
\theoremstyle{definition}
\newtheorem{definition}[lemma]{Definition}
\newtheorem{conjecture}[lemma]{Conjecture}
\begin{document}
\maketitle

\begin{abstract}

We introduce a new model of random Artin groups. The two variables we consider are the rank of the Artin groups and the set of permitted coefficients of their defining graphs.

The heart of our model is to control the speed at which we make that set of permitted coefficients grow relatively to the growth of the rank of the groups, as it turns out different speeds yield very different results. We describe these speeds by means of (often polynomial) functions. In this model, we show that for a large range of such functions, a random Artin group satisfies most conjectures about Artin groups asymptotically almost surely.

Our work also serves as a study of how restrictive the commonly studied families of Artin groups are, as we compute explicitly the probability that a random Artin group belongs to various families of Artin groups, such as the classes of $2$-dimensional Artin groups, $FC$-type Artin groups, large-type Artin groups, and others.

\end{abstract}

\section{Introduction.}

Artin groups are a family of groups that have drawn an increasing interest in the past few decades. They are defined as follows. Let $\Gamma$ be a \textbf{defining graph}, that is a simplicial graph with vertex set $V(\Gamma)$ and edge set $E(\Gamma)$, such that every edge $e_{ab}$ of $\Gamma$ connecting two vertices $a$ and $b$ is given a coefficient $m_{ab} \in \{2, 3, \cdots \}$. Then $\Gamma$ defines an \textbf{Artin group}:
$$A_{\Gamma} \coloneqq \langle \ V(\Gamma) \ | \ \underbrace{aba\cdots}_{m_{ab} \ terms} = \underbrace{bab\cdots}_{m_{ab} \ terms}, \forall e_{ab} \in E(\Gamma) \ \rangle.$$


The cardinality of $V(\Gamma)$, that is the number of \textbf{standard generators} of $A_{\Gamma}$, is called the \textbf{rank} of $A_{\Gamma}$. When $a$ and $b$ are not connected by an edge we set $m_{ab} \coloneqq \infty$.
\medskip

One of the main reasons why Artin groups have become of such great interest is because of the amount of (often easily stated) conjectures and problems about them that are still to be solved. While some of these conjectures are algebraic (torsion, centres), some others are more geometric (acylindrical hyperbolicity, CAT(0)-ness), algorithmic (word and conjugacy problems, biautomaticity), or even topological. Although close to none of these conjectures or problems has been answered in the most general case, there has been progress on each of them. A common theme towards proving these conjectures has been to prove them for smaller families of Artin groups.
\medskip

The goal of this paper is to consider Artin groups with a probabilistic approach. One might wonder “What does a typical Artin group look like?”, and hence want to define a notion of randomness for Artin groups. By computing the different “sizes” of the most commonly studied classes of Artin groups, we give a way to quantify how restrictive these different classes really are. In light of that, our model provides a novel and explicit way of quantifying the state of the common knowledge about the aforementioned conjectures and problems about Artin groups.
\medskip

Although Artin groups are defined using defining graphs, it is not known in general when two defining graphs give rise to isomorphic Artin groups. This problem, known as the \textbf{isomorphism problem}, is actually quite hard to solve even for restrictive classes of Artin groups. With our current knowledge, any (reachable) theory of randomness for Artin groups must then be based on the randomness of defining graphs, and not of the Artin groups themselves.
\medskip




Random right-angled Coxeter (and Artin) groups have been studied by several authors in the literature (\cite{CharneyFarber}, \cite{BHSC}), using the Erdős–Rényi model. While in \cite{CharneyFarber} the authors fix the probability of apparition of an edge as some constant $0 \leq p \leq 1$, in \cite{BHSC} this model is refined: $p = p(N)$ depends on the rank $N$ of the group. That said, these models restrict to right-angled groups, where the associated defining graphs are not labelled. In \cite{Deibel}, the author introduces a model of randomness for Coxeter groups in general. There are similarities between this model and ours, although the former revolves more about making the probabilities of apparition of specific coefficients vary. In particular, this model is not very well suited to provide insights on the “sizes” of the most commonly studied classes of Coxeter and Artin groups. On the contrary, this is a central goal of our model.
\medskip

The two variables that come to mind when thinking about Artin groups are their rank, that is the number of vertices of the defining graph, as well as the choice of the associated coefficients. A first step in the theory is to consider what happens if we restrict ourselves to the family $\mathcal{G}^{N,M}$ of all the defining graphs with $N$ vertices and with coefficients in $\{ \infty, 2, 3, \cdots, M \}$, for some $N \geq 1$ and $M \geq 2$. As we want any possible rank and any possible coefficient to eventually appear in a random Artin group, a convenient way to think about randomness is to pick a defining graph at random in the family $\mathcal{G}^{N,M}$, and then to make $N$ and $M$ grow to infinity.
\medskip

As it turns out, randomness of defining graphs highly depends on the speed at which $N$ and $M$ grow. A prime example of this is that the probability for a defining graph of $\mathcal{G}^{N,M}$ to give an Artin group of large-type (meaning that none of the coefficients is $2$) tends to $1$ when $M$ grows much faster than $N$, and tends to $0$ when $N$ grows much faster than $M$. To solve this problem, we decide to relate $N$ and $M$ through a function $f$ so that $M \coloneqq f(N)$. This way, we only have to look at the family $\mathcal{G}^{N, f(N)}$ when $N$ goes to infinity.
\medskip

If $A_\mathcal{F}$ is a family of Artin groups coming from a family of defining graphs $\mathcal F$, a way of measuring the “size” of $A_\mathcal{F}$ is to compute the limit
$$\lim_{N \rightarrow \infty} \frac{\#(\mathcal{F} \cap \mathcal{G}^{N, f(N)})}{\#(\mathcal{G}^{N, f(N)})}.$$
Of course, this ratio depends on the choice we make for the function $f$. When the above limit is $1$, that is when the probability that a graph picked at random in $\mathcal{G}^{N, f(N)}$ will give an Artin group that belongs to the said family $A_\mathcal F$ tends to $1$, we say that an Artin group picked at random (relatively to $f$) is \textbf{asymptotically almost surely} in $ A_\mathcal F$.
\medskip

That said, there are families $A_\mathcal F$ of Artin groups for which the above limit tends to $1$ no matter what (sensible) choice we make for the function $f$. We say that such a family is \textbf{uniformly large} (resp. \textbf{uniformly small} if that limit is always $0$). Our first result concern such families of Artin groups:

\begin{theorem}
    The family of irreducible Artin groups and the family of Artin groups with connected defining graphs are uniformly large. On the other hand, the family of Artin groups of type FC is uniformly small. In particular, the same applies to the families of RAAGs and triangle-free Artin groups.

\end{theorem}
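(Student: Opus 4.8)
The plan is to equip $\mathcal{G}^{N,M}$ with the uniform measure, which amounts to assigning to each of the $\binom{N}{2}$ pairs of vertices an independent label drawn uniformly from the $M$-element set $\{\infty,2,\dots,M\}$; throughout I write $M=f(N)$, and since $f$ is sensible I may assume $f(N)\ge 3$ for all large $N$. The two uniformly large statements both follow from a single elementary counting estimate, while all the uniformly small statements reduce to one cheap local obstruction to being of type $FC$.

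\textbf{Uniformly large.} A defining graph $\Gamma$ is disconnected precisely when $V(\Gamma)$ splits as a disjoint union $S\sqcup S'$ of nonempty sets with every crossing pair labelled $\infty$; and $A_\Gamma$ is reducible --- i.e.\ $\Gamma$ is a non-trivial join all of whose joining edges carry the label $2$, which is the same condition as the Coxeter group $W_\Gamma$ being reducible --- precisely when there is such a splitting with every crossing pair labelled $2$. A union bound over the $\binom{N}{k}$ subsets $S$ of size $k$ then gives
$$\mathbb{P}\bigl[\Gamma\text{ disconnected}\bigr]\ \le\ \sum_{k=1}^{N-1}\binom{N}{k}M^{-k(N-k)}\ \le\ \sum_{k=1}^{N-1}\binom{N}{k}2^{-k(N-k)},$$
using $M\ge 2$, and the identical bound holds for $\mathbb{P}\bigl[A_\Gamma\text{ reducible}\bigr]$. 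The right-hand side tends to $0$ as $N\to\infty$: for $1\le k\le N/2$ one has $k(N-k)\ge kN/2$, so the $k$-th term is at most $\bigl(N\,2^{-N/2}\bigr)^{k}$ and the partial sum over that range is dominated by a vanishing geometric series, while the range $N/2\le k\le N-1$ is symmetric. Hence connected defining graphs and irreducible Artin groups are each uniformly large.

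\textbf{Uniformly small.} Call a triple of vertices \emph{bad} if the three pairs it spans all receive integer labels $\ge 3$. A bad triple supports a standard parabolic which is a triangle Artin group with all labels $\ge 3$, hence is not of spherical type --- a triangle Coxeter group is finite only when $\tfrac1p+\tfrac1q+\tfrac1r>1$, which fails when $p,q,r\ge 3$ --- and since this parabolic is supported on a clique, its presence prevents $A_\Gamma$ from being of type $FC$. I split $\{1,\dots,N\}$ into $\lfloor N/3\rfloor$ pairwise disjoint triples; these involve disjoint sets of edges, so the events ``the $i$-th triple is not bad'' are independent, each of probability $1-\bigl(\tfrac{M-2}{M}\bigr)^{3}\le 1-\bigl(\tfrac13\bigr)^{3}=\tfrac{26}{27}$ once $M\ge 3$. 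Therefore
$$\mathbb{P}\bigl[A_\Gamma\text{ is of type }FC\bigr]\ \le\ \mathbb{P}\bigl[\text{no bad triple}\bigr]\ \le\ \Bigl(\tfrac{26}{27}\Bigr)^{\lfloor N/3\rfloor}\ \xrightarrow[\ N\to\infty\ ]{}\ 0,$$
so type $FC$ Artin groups are uniformly small (when $f(N)\to\infty$ even a single fixed triple suffices, since $\mathbb{P}[A_\Gamma\text{ is }FC]\le 1-(1-2/f(N))^{3}\to 0$). Finally, every triangle-free Artin group is of type $FC$ because all its cliques have at most two vertices (spherical parabolics of type $A_1$ or $I_2(m)$), and every RAAG is of type $FC$ because all its cliques are labelled entirely by $2$'s (spherical parabolics of type $A_1^{k}$); by inclusion these two families are uniformly small as well.

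\textbf{Where the care goes.} The estimates themselves are routine; the two points requiring care are (i) matching the group-theoretic families in the statement with conditions purely on the labelled defining graph --- in particular observing that the ``visual'' notion of reducibility of an Artin group coincides with reducibility of its Coxeter group --- and (ii) choosing the obstruction to type $FC$ so that it is simultaneously a genuine non-spherical standard parabolic and cheap enough that, by passing to vertex-disjoint copies to obtain independence, it appears asymptotically almost surely no matter how fast the sensible function $f$ grows.
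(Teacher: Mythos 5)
Your proposal is correct and follows essentially the same route as the paper: a union bound over the possible $2$-join (respectively $\infty$-join) decompositions $V(\Gamma)=S\sqcup S'$ to show reducible and disconnected graphs are uniformly small, and a complete triangle with all labels $\ge 3$ (hence a non-spherical standard parabolic) as the obstruction to type FC, with RAAGs and triangle-free groups handled by inclusion into FC. The only cosmetic difference is in the FC step, where the paper uses a single fixed triple together with the divergence of $f$ (exactly your parenthetical remark), while you additionally boost the bound via $\lfloor N/3\rfloor$ vertex-disjoint triples, which would even cover bounded label sets but is not needed under the paper's standing assumption that $f$ is divergent.
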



As mentioned earlier, there are numerous families of Artin groups whose “size” depends on the choice of function $f$. When $f$ is large enough, which means that the choice of possible coefficients for the defining graphs grows fast enough compared to the rank of the Artin group, we obtain much stronger results. This is made explicit in the next two theorems.
\medskip

Before stating these results, we recall a very natural partial ordering on (non-decreasing divergent) functions, which is given by $f \succ g$ whenever $\lim_{N \rightarrow \infty} f(N)/g(N) = \infty$.

\begin{theorem}
    Let $A_\mathcal{F}$ be any family of Artin groups defined by forbidding a finite number $k$ of coefficients from their defining graphs, and consider a function $f : \mathbf{N} \rightarrow \mathbf{N}$. Let $\Gamma$ be a graph picked at random in $\mathcal{G}^{N, f(N)}$. Then:
    \begin{enumerate}

    \item  If $f(N) \succ N^2$, then $A_\Gamma$ asymptotically almost surely belongs to $\mathcal{F}$.
    \item If $f(N) \prec N^2$, then $A_\Gamma$ asymptotically almost surely does not belong to $\mathcal{F}$.
    \item If $f(N) = N^2$, then the probability that $A_{\Gamma}$ belongs to $\mathcal{F}$ tends to $e^{-k/2}$ when $N \rightarrow \infty$.
   \end{enumerate}
\end{theorem}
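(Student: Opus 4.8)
The plan is to reduce the statement to a single elementary asymptotic estimate. First I would set up the counting. A defining graph on a fixed vertex set of size $N$ is exactly an assignment of a label from the alphabet $\{\infty, 2, 3, \dots, M\}$ (which has $M$ elements: the symbol $\infty$ together with the $M-1$ integers $2,\dots,M$) to each of the $\binom{N}{2}$ unordered pairs of vertices, so $\#\mathcal{G}^{N,M} = M^{\binom{N}{2}}$ and the $\binom{N}{2}$ edge-labels are independent and uniform under the uniform measure on $\mathcal{G}^{N,M}$. The family $\mathcal{F}$ is obtained by forbidding a fixed set $S$ of $k$ coefficients; assuming (as is implicit throughout, since we let $M = f(N) \to \infty$) that $f$ is divergent, for all $N$ large enough one has $S \subseteq \{\infty, 2, \dots, f(N)\}$, and then a graph of $\mathcal{G}^{N, f(N)}$ lies in $\mathcal{F}$ precisely when each of its $\binom{N}{2}$ labels avoids $S$, i.e. lies in the complementary alphabet of size $f(N)-k$. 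Hence $\#(\mathcal{F} \cap \mathcal{G}^{N, f(N)}) = (f(N)-k)^{\binom{N}{2}}$ and
$$p_N \coloneqq \frac{\#(\mathcal{F} \cap \mathcal{G}^{N, f(N)})}{\#\mathcal{G}^{N, f(N)}} = \left(1 - \frac{k}{f(N)}\right)^{\binom{N}{2}}.$$

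Next I would analyse $p_N$ by taking logarithms. Since $k/f(N) \to 0$, the bound $\lvert \ln(1-x) + x\rvert \le x^2$ valid for $0 \le x \le \tfrac12$ gives
$$\ln p_N = \binom{N}{2}\ln\!\left(1 - \frac{k}{f(N)}\right) = -\frac{k\binom{N}{2}}{f(N)} + O\!\left(\frac{N^2}{f(N)^2}\right) = -\frac{k}{2}\cdot\frac{N(N-1)}{f(N)} + o\!\left(\frac{N^2}{f(N)}\right),$$
the last error being negligible because $1/f(N) \to 0$. The three cases now follow at once. In case (1), $f(N) \succ N^2$ means $N^2/f(N) \to 0$, so $\ln p_N \to 0$ and $p_N \to 1$, i.e. $A_\Gamma \in \mathcal{F}$ asymptotically almost surely. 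In case (2), $f(N) \prec N^2$ means $N^2/f(N) \to \infty$, so $\ln p_N = \frac{N^2}{f(N)}\bigl(-\frac{k}{2} + o(1)\bigr) \to -\infty$ and $p_N \to 0$, i.e. $A_\Gamma \notin \mathcal{F}$ asymptotically almost surely. In case (3), $f(N) = N^2$ gives $\ln p_N = -\frac{k}{2}\cdot\frac{N(N-1)}{N^2} + o(1) \to -\frac{k}{2}$, hence $p_N \to e^{-k/2}$.

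The argument is in essence a computation, so there is no serious obstacle; the points that need care are: (a) correctly identifying the size of the label alphabet and the independence of the $\binom{N}{2}$ edge-labels, which is what makes $p_N$ factor as a single power; (b) invoking divergence of $f$ so that all $k$ forbidden coefficients eventually lie in the alphabet (otherwise a bounded $f$ could make some forbidden coefficient unattainable, and conclusion (2) could fail — this degenerate regime should be excluded or treated separately); and (c) keeping the second-order term of $\ln(1 - k/N^2)$ under control in case (3) in order to obtain the exact constant $e^{-k/2}$ rather than merely a limit somewhere in $(0,1)$.
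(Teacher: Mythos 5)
Your proposal is correct and follows essentially the same route as the paper: the same exact counts $\#\mathcal{G}^{N,f(N)} = f(N)^{\binom{N}{2}}$ and $\#(\mathcal{F}\cap\mathcal{G}^{N,f(N)}) = (f(N)-k)^{\binom{N}{2}}$, with the limit of $\bigl(1-k/f(N)\bigr)^{\binom{N}{2}}$ then evaluated in the three regimes (you via logarithms, the paper via the $(1-k/f)^{f}\to e^{-k}$ substitution $f=hN^2$ or $fh=N^2$, which is only a cosmetic difference). Your caveat (b) about needing $f$ divergent so that all $k$ forbidden coefficients eventually lie in the alphabet is a fair point of care that the paper leaves implicit.
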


Note that the previous theorem applies to the families of large-type, extra-large-type, or large-type and free-of-infinity Artin groups. There are strong results in the literature about these families of Artin groups, as most of the famous conjectures and problems about Artin groups have been solved for at least one of them (see Section 2).
\medskip

While these different families of Artin groups have the same threshold at $f(N) = N^2$ no matter how many coefficients we forbid, the class of $2$-dimensional Artin groups turns out to be substantially bigger. Studying this class, we obtain the following result:

\begin{theorem}
    Consider a non-decreasing divergent function $f : \mathbf{N} \rightarrow \mathbf{N}$. Let $\Gamma$ be a graph picked at random in $\mathcal{G}^{N, f(N)}$. Then:
    \begin{enumerate}

        \item If $f(N) \succ N^{3/2}$, then $A_\Gamma$ asymptotically almost surely is 2-dimensional.
        \item  If $f(N) \prec N^{3/2}$, then $A_\Gamma$ asymptotically almost surely is not 2-dimensional.
    \end{enumerate}
\end{theorem}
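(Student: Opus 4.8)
The plan is to reduce the statement to a question about triangles in $\Gamma$. Recall (see Section~2) that $A_\Gamma$ is $2$-dimensional if and only if $\Gamma$ contains no \emph{spherical triangle}, that is, no triple of pairwise-adjacent vertices $\{a,b,c\}$ with $\tfrac{1}{m_{ab}}+\tfrac{1}{m_{bc}}+\tfrac{1}{m_{ac}}>1$ (equivalently, $A_{\{a,b,c\}}$ of spherical type). By the classification of rank-three spherical Artin groups, the edge-labels of such a triangle are, up to permutation, either $(2,2,n)$ for some $n\ge 2$, or one of the sporadic triples $(2,3,3)$, $(2,3,4)$, $(2,3,5)$. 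In the model, the $\binom{N}{2}$ potential edges of $\Gamma\in\mathcal{G}^{N,f(N)}$ carry independent labels uniform on the $M$-element set $\{\infty,2,\dots,M\}$, where $M=f(N)$, so a fixed triple is a spherical triangle with probability
\[
p_{\mathrm{sph}}=\frac{3M-5}{M^{3}}+\frac{15}{M^{3}}=\Theta\!\left(\frac{1}{M^{2}}\right),
\]
the dominant term $\tfrac{3M-5}{M^{3}}$ coming from the family $(2,2,n)$ (two of the three edges must receive the label $2$ --- with $\binom{3}{2}$ choices of which two --- and the third may receive any of the $M-1$ finite labels), while each sporadic type contributes only $O(1/M^{3})$.

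For part~(1), let $X$ be the number of spherical triangles of $\Gamma$, so that $\mathbb{E}[X]=\binom{N}{3}\,p_{\mathrm{sph}}=\Theta(N^{3}/M^{2})$. Since $f(N)\succ N^{3/2}$ forces $M^{2}\succ N^{3}$, we get $\mathbb{E}[X]\to 0$, and the first moment method gives $\mathbb{P}[X\ge 1]\le\mathbb{E}[X]\to 0$. Hence a.a.s.\ $\Gamma$ has no spherical triangle, and so a.a.s.\ $A_\Gamma$ is $2$-dimensional.

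For part~(2) it suffices to produce, a.a.s., one spherical triangle, and we restrict attention to those of type $(2,2,n)$. Let $Y=\sum_{T}Y_{T}$, where $T$ runs over the $\binom{N}{3}$ triples and $Y_{T}$ is the indicator that $T$ spans a triangle with at least two edge-labels equal to $2$ (every such triangle is spherical). First, $\mathbb{E}[Y]=\binom{N}{3}\cdot\frac{3M-5}{M^{3}}\sim\frac{N^{3}}{2M^{2}}\to\infty$ since $f(N)\prec N^{3/2}$. To conclude via the second moment method, expand $\mathbb{E}[Y^{2}]=\sum_{T,T'}\mathbb{E}[Y_{T}Y_{T'}]$ and split according to $|T\cap T'|$: if $|T\cap T'|\le 1$ the two triangles have disjoint edge-sets, $Y_{T}$ and $Y_{T'}$ are independent, and these terms sum to at most $\mathbb{E}[Y]^{2}$; the diagonal contributes $\mathbb{E}[Y]$; and for the $O(N^{4})$ ordered pairs with $|T\cap T'|=2$, conditioning on the label of the shared edge shows each such pair contributes $O(1/M^{3})$ (if that label is $2$, each of $T,T'$ needs only one further $2$ among its two remaining edges; if it is finite but $\ne 2$, each needs both; if it is $\infty$, neither is a triangle). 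Therefore $\mathrm{Var}(Y)\le\mathbb{E}[Y]+O(N^{4}/M^{3})$ and
\[
\frac{\mathrm{Var}(Y)}{\mathbb{E}[Y]^{2}}\le\frac{1}{\mathbb{E}[Y]}+\frac{O(N^{4}/M^{3})}{\mathbb{E}[Y]^{2}}=O\!\left(\frac{M^{2}}{N^{3}}\right)+O\!\left(\frac{M}{N^{2}}\right)\longrightarrow 0,
\]
using $f(N)\prec N^{3/2}$ once more (so also $M\prec N^{2}$ and $M/N^{2}\to 0$). Chebyshev's inequality then yields $\mathbb{P}[Y=0]\to 0$, so a.a.s.\ $\Gamma$ has a triangle with two edges labelled $2$, and hence a.a.s.\ $A_\Gamma$ is not $2$-dimensional.

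The two moment computations are routine; the one point demanding care is the edge-sharing term above, where one must check that the excess $O(N^{4}/M^{3})$ is absorbed by $\mathbb{E}[Y]^{2}=\Theta(N^{6}/M^{4})$ --- exactly the condition $M\prec N^{2}$, which $M\prec N^{3/2}$ comfortably implies. The conceptual heart, rather than any real obstacle, is recognising that the threshold is governed entirely by the $(2,2,n)$-triangles: since two of their three labels are prescribed while the third is free, a triple is such a triangle with probability $\Theta(1/M^{2})$, which places the critical scale at $N^{3}=\Theta(M^{2})$, i.e.\ $f$ of order $N^{3/2}$; the sporadic spherical triangles are a further factor $\Theta(1/M)$ rarer and do not influence any of the estimates.
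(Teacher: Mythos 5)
Your argument is correct, and it takes a somewhat different route from the paper's. The paper pivots on the auxiliary family $\mathcal B$ of $(2,2)$-free graphs: Lemma \ref{lem:comparing_dim2_2_2free} shows $\mathbb P_f[A_\Gamma \in A_\mathcal D] \leq \mathbb P_f[A_\Gamma \in A_\mathcal B]$ with equality when $f(N)\succ N$, and the proof of Theorem \ref{theorem:size_of_dimension2} then runs Markov and the second moment method on $X$, the number of ordered pairs of adjacent $2$-labelled edges, computing $\mathbb E[X^2]$ exactly via a nine-case analysis of how two such configurations can overlap. You instead count genuine spherical triangles directly, building the requirement that the third edge be finite into the indicator $Y_T$; this absorbs, at the level of the random variable, exactly the subtlety (two adjacent $2$'s with third label $\infty$ do not violate $2$-dimensionality) that the paper handles through the comparison lemma, so you never need that lemma nor the hypothesis $f\succ N$ for the upper regime. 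Your variance estimate is also coarser — splitting by $|T\cap T'|$ and bounding the $|T\cap T'|=2$ terms by $O(N^4/M^3)$ rather than computing exact asymptotics — which suffices for items (1) and (2) and makes the computation shorter; what the paper's finer bookkeeping buys is the precise limit of $\mathbb E[X^2]/\mathbb E[X]^2$, which they exploit at the threshold $f(N)=N^{3/2}$ to get the bound $\mathbb P_f[A_\Gamma\in A_\mathcal D]\leq 2/3$ (item (3) of their theorem, not part of the statement you were asked to prove). Two cosmetic points: your stated count $3M-5$ for the $(2,2,n)$-type label triples is right, but the parenthetical derivation ($\binom{3}{2}$ choices times $M-1$ labels) overcounts the all-$2$ triangle and should be corrected by inclusion–exclusion; and in the $|T\cap T'|=2$ case with shared label $2$ one should say each triple needs its two remaining edges finite with at least one labelled $2$ — this is still $O(1/M)$, so the bound stands.
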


A consequence of the two previous theorems is that we are able, when $f$ grows fast enough, to show that an Artin group picked at random asymptotically almost surely satisfies most of the main conjectures about Artin groups:

\begin{theorem} \label{Thm4}
Let $f : \mathbf{N} \rightarrow \mathbf{N}$ be such that $f(N) \succ N^{3/2}$, and let $\Gamma$ be a graph picked at random in $\mathcal{G}^{N, f(N)}$. Then asymptotically almost surely, the following properties hold:

\begin{enumerate}
\item  $A_{\Gamma}$ is torsion-free;
\item  $A_{\Gamma}$ has trivial centre;
\item $A_{\Gamma}$ has solvable word and conjugacy problem;
\item $A_{\Gamma}$ satisfies the $K(\pi, 1)$-conjecture;
\item The set of parabolic subgroups of $A_{\Gamma}$ is closed under (arbitrary) intersections;
\item $A_{\Gamma}$ is acylindrically hyperbolic;
\item $A_{\Gamma}$ satisfies the Tits Alternative.
\smallskip

Moreover, if $f(N) \succ N^2$ then asymptotically almost surely the following properties also hold:
\smallskip

\item $A_{\Gamma}$ is CAT(0);
\item $A_{\Gamma}$ is hierarchically hyperbolic;
\item $A_{\Gamma}$ is systolic and thus biautomatic;
\item $Aut(A_{\Gamma}) \cong A_{\Gamma} \rtimes Out(A_{\Gamma})$, where $Out(A_{\Gamma}) \cong Aut(\Gamma) \times (\mathbf{Z} / 2 \mathbf{Z})$ is finite.
\end{enumerate}

\end{theorem}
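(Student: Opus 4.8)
The plan is to read the statement as a translation exercise: each listed property is already a theorem in the literature for some combinatorially defined class $\mathcal{C}$ of Artin groups, so it suffices to feed the preceding theorems the right classes and to verify a handful of auxiliary facts that hold asymptotically almost surely. For items (1)--(7) the relevant class is that of connected, irreducible, \emph{hyperbolic-type} $2$-dimensional Artin groups; for items (8)--(11) it is that of large-type, free-of-infinity (equivalently, all coefficients large) Artin groups.

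First I would establish the base geometry when $f(N) \succ N^{3/2}$. The theorem on $2$-dimensional Artin groups gives that $A_\Gamma$ is a.a.s.\ $2$-dimensional, and the theorem asserting that irreducible Artin groups (resp.\ Artin groups with connected defining graph) form a uniformly large family gives that $A_\Gamma$ is a.a.s.\ irreducible with $\Gamma$ a.a.s.\ connected. To these I would add one first-moment computation: a.a.s.\ $\Gamma$ has no triangle $T$ with $\sum_{e \in T} 1/m_e \geq 1$, so that $A_\Gamma$ is a.a.s.\ of hyperbolic type. Indeed $2$-dimensionality already rules out $\sum_{e \in T} 1/m_e > 1$ a.a.s., while $\sum_{e \in T} 1/m_e = 1$ forces, up to relabelling, a triangle with coefficients in the finite list $(3,3,3),\,(2,4,4),\,(2,3,6),\,(2,2,\infty)$, and the expected number of such triangles in $\mathcal{G}^{N,f(N)}$ is $O\!\big(N^3/f(N)^3\big)$, which tends to $0$ as soon as $f(N) \succ N$ and hence in our regime. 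With $2$-dimensionality, irreducibility, connectedness and hyperbolic type all available a.a.s., properties (1)--(7) follow by quoting known results: the $K(\pi,1)$-conjecture for $2$-dimensional Artin groups (Charney--Davis), which also yields torsion-freeness through a finite classifying space; acylindrical hyperbolicity for irreducible hyperbolic-type $2$-dimensional Artin groups, from which triviality of the centre follows once combined with torsion-freeness (an infinite centre would be an infinite amenable normal subgroup, impossible in an acylindrically hyperbolic group); the Tits Alternative (Martin--Przytycki) and the closure of the set of parabolic subgroups under intersection for $2$-dimensional Artin groups; and solvability of the word and conjugacy problems, e.g.\ via metric systolicity of $2$-dimensional Artin groups (Huang--Osajda).

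Next I would treat the stronger regime $f(N) \succ N^2$ using the theorem on families obtained by forbidding finitely many coefficients: forbidding the coefficient $2$ shows $A_\Gamma$ is a.a.s.\ of large type, and forbidding any fixed finite set of small coefficients shows a.a.s.\ every $m_{ab}$ exceeds any prescribed constant; moreover the expected number of absent edges is $O\!\big(N^2/f(N)\big) \to 0$, so $\Gamma$ is a.a.s.\ complete and $A_\Gamma$ a.a.s.\ free of infinity. Then (8)--(11) follow from results known for large-type free-of-infinity (or extra-large-type) Artin groups: CAT(0)-ness; hierarchical hyperbolicity; systolicity and hence biautomaticity (systolic groups being biautomatic); and the identification $\mathrm{Aut}(A_\Gamma) \cong A_\Gamma \rtimes (\mathrm{Aut}(\Gamma) \times \mathbf{Z}/2\mathbf{Z})$ with $\mathrm{Out}(A_\Gamma)$ finite.

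The step I expect to be the main obstacle is the bookkeeping of hypotheses, not any single hard argument. On one hand, one must check that the \emph{auxiliary} hypotheses demanded by the quoted theorems --- connectedness and irreducibility, and above all ``hyperbolic type'' --- really do hold a.a.s.\ when $f(N) \succ N^{3/2}$; this is exactly the first-moment estimate above, and the things to be careful about are getting its threshold right (it lies comfortably below $N^{3/2}$) and making the list of exceptional triangles genuinely exhaustive, in particular the ones carrying an $\infty$ label. On the other hand, and more tediously, several of the source theorems are stated under slightly stronger standing assumptions than ``$2$-dimensional'' or ``large type'' alone, so for each of (1)--(11) one must pin down the precise class it requires and confirm that a random $A_\Gamma$ a.a.s.\ belongs to it.
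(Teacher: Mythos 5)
Your overall route is the same as the paper's: show that a.a.s.\ $\Gamma$ lies in the intersection of the relevant combinatorially defined classes (for $f(N)\succ N^{3/2}$: $2$-dimensional, irreducible, connected; for $f(N)\succ N^2$: all coefficients larger than any prescribed constant and no $\infty$-labels), then quote the corresponding results from the literature. The extra ``hyperbolic type'' verification is harmless (and in fact avoidable, since the acylindrical hyperbolicity result the paper cites holds for all irreducible non-spherical $2$-dimensional Artin groups), and deducing triviality of the centre from acylindrical hyperbolicity plus torsion-freeness is a valid substitute for the citation the paper uses. The second regime is handled exactly as in the paper (forbidding finitely many coefficients, including $\infty$), up to a small mis-attribution: CAT(0)-ness is known for XXL Artin groups, not for large-type free-of-infinity ones, which is harmless since your argument gives a.a.s.\ all coefficients at least $5$.

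The genuine gap is item (5). Closure of the set of parabolic subgroups under arbitrary intersections is \emph{not} known for $2$-dimensional Artin groups in general: the results invoked in Conjecture \ref{Conj1to11}(5) cover large-type Artin groups and, more generally, $(2,2)$-free $2$-dimensional Artin groups. So the step ``quote the known result for $2$-dimensional Artin groups'' quotes a theorem that does not exist, and in the range where $f(N)\succ N^{3/2}$ but $f(N)\not\succ N^2$ you cannot fall back on large-type, since by Theorem \ref{Theorem_N^2} the large-type threshold is $N^2$. What is needed is the additional statement that $A_\Gamma$ is a.a.s.\ $(2,2)$-free when $f(N)\succ N^{3/2}$; this is precisely how the paper proceeds, as its treatment of the $2$-dimensionality threshold goes through $(2,2)$-freeness (Lemma \ref{lem:comparing_dim2_2_2free} together with the Markov-inequality half of Theorem \ref{theorem:size_of_dimension2}, where the expected number of pairs of adjacent $2$-labelled edges is of order $N^3 f(N)^{-2}\to 0$), and Corollary \ref{Coro7Conj} places $A_\Gamma$ a.a.s.\ in $A_{Irr}\cap A_{\mathcal D}\cap A_{\mathcal B}$ before citing the literature. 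The fix is entirely within your own toolkit --- it is the same first-moment bound you already use for hyperbolic type --- but as written the argument for (5) fails.
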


At last, we also prove interesting results for families of Artin groups in which the number $M$ of permitted coefficients grows “slowly enough” compared to the rank $N$. We focus on the class of Artin groups $A_{\Gamma}$ whose associated graphs $\Gamma$ are not cones, and we prove that for most (non-decreasing divergent) functions, the probability that a random Artin group is acylindrically hyperbolic and has trivial centre tends to $1$.


\begin{theorem} \label{TheoremNThreshold}
Let $\alpha \in (0,1)$ and let $f : \mathbf{N} \rightarrow \mathbf{N}$ be a non-decreasing divergent function satisfying $f(N) \prec N^{1-\alpha}$. Let now $\Gamma$ be a graph picked at random in $\mathcal{G}^{N, f(N)}$. Then the associated Artin group $A_{\Gamma}$ is acylindrically hyperbolic and has trivial centre asymptotically almost surely.
\end{theorem}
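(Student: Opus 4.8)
The plan is to split the statement into a structural input about Artin groups and a short probabilistic statement about the random defining graph, and then check that the two match up. Recall that a simplicial graph $\Gamma$ is a \emph{join} if $\Gamma = \Gamma_1 \ast \Gamma_2$ for two nonempty full subgraphs, i.e.\ if every vertex of $\Gamma_1$ is joined to every vertex of $\Gamma_2$ (the labels on the connecting edges being arbitrary); note that a join is always connected and that a cone is a join. The structural fact I would invoke is essentially the theorem of Charney and Morris-Wright on centres and acylindrical hyperbolicity of infinite-type Artin groups: \emph{if $\Gamma$ is not a join and $A_\Gamma$ is of infinite type, then $A_\Gamma$ is acylindrically hyperbolic and has trivial centre.} It is convenient to repackage this as: if $\Gamma$ is not a join and not a tree, then $A_\Gamma$ is acylindrically hyperbolic with trivial centre. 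Indeed, a graph that is not a join is either disconnected --- in which case $A_\Gamma$ is a free product of at least two infinite Artin groups, hence acylindrically hyperbolic with trivial centre by classical facts on free products --- or connected; and a connected graph containing a cycle yields an Artin group of infinite type, since every spherical-type Artin group has a forest as its defining graph. So it suffices to prove that a graph $\Gamma$ picked at random in $\mathcal{G}^{N, f(N)}$ is asymptotically almost surely neither a join nor a tree.

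That $\Gamma$ is a.a.s.\ not a tree is immediate: the expected number of triangles of $\Gamma$ is $\binom{N}{3}\bigl(1-1/f(N)\bigr)^{3}\to\infty$ and concentrates, so $\Gamma$ a.a.s.\ contains a cycle. The heart of the matter is that $\Gamma$ is a.a.s.\ not a join. The key observation is that a graph is a join if and only if its complement $\Gamma^{c}$ is disconnected, and that in our model $\Gamma^{c}$ is exactly an Erdős--Rényi graph $G(N,q)$ with $q=1/f(N)$: a pair of vertices is a non-edge of $\Gamma$ precisely when it receives the label $\infty$, and $\infty$ is one of the $f(N)$ equally likely labels, independently over pairs. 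Now $f(N)\prec N^{1-\alpha}$ forces $q=1/f(N)\succ N^{\alpha-1}\succ(\log N)/N$, which is well above the connectivity threshold of $G(N,q)$; hence $\Gamma^{c}=G(N,q)$ is a.a.s.\ connected and $\Gamma$ is a.a.s.\ not a join. Quantitatively, one has the standard first-moment bound
$$\Pr\bigl[\Gamma^{c}\ \text{disconnected}\bigr]\ \leq\ \sum_{k=1}^{\lfloor N/2\rfloor}\binom{N}{k}(1-q)^{k(N-k)}\ \leq\ \sum_{k\geq 1}e^{\,k(\log N-qN/2)},$$
and since $qN=N/f(N)\succ N^{\alpha}\gg\log N$ the right-hand side is $O\bigl(e^{-qN/4}\bigr)\to 0$; the $k=1$ term alone shows that a.a.s.\ no vertex of $\Gamma$ is adjacent to all the others, i.e.\ $\Gamma$ is a.a.s.\ not even a cone. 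Combining the two estimates with the structural input gives the theorem.

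I do not anticipate a genuine difficulty: once the right combinatorial invariant (``being a join'') has been isolated, the probabilistic content is a textbook connectivity computation and the structural content is a citation; the only care needed is the bookkeeping of checking that the configurations excluded by the Charney--Morris-Wright hypotheses (joins, and spherical/tree defining graphs) are precisely the events ruled out above. What is perhaps more instructive is to see why the hypothesis on $f$ is genuinely needed: when $f$ grows faster than about $N/\log N$, the complement $\Gamma^{c}$ acquires isolated vertices, so $\Gamma$ becomes a.a.s.\ a cone --- hence a join --- and the conclusion fails. In particular the polynomial bound $f(N)\prec N^{1-\alpha}$ is comfortable but not optimal: the same argument works verbatim whenever $f(N)\prec N/\log N$.
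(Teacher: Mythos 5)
Your main line is correct, and it takes a genuinely different (in fact stronger) probabilistic route than the paper. The paper proves separately that a random graph is a.a.s.\ not a cone (a union bound over the $N$ possible apexes, Proposition \ref{prop:ah_cone}) and combines this with its earlier Lemma \ref{lem:being_reducible_uniformly small} (every $\mathcal J_k$ is uniformly small, so irreducibility is a.a.s.) via Lemma \ref{lem:interescting_big_class}, before citing Kato--Oguni and Charney--Morris-Wright for ``irreducible and not a cone''. You instead prove in one stroke that $\Gamma$ is a.a.s.\ not a join of any kind, using the observation that $\Gamma$ is a join if and only if its complement is disconnected, and that the complement is exactly an Erd\H{o}s--R\'enyi graph $G(N,q)$ with $q=1/f(N)$; since $qN=N/f(N)\succ N^{\alpha}\gg \log N$, the standard first-moment disconnection bound (which you state correctly) gives the claim. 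As ``not a join'' implies both ``irreducible'' (not a $2$-join) and ``not a cone'', your single estimate subsumes both of the paper's probabilistic inputs, and, as you observe, it works for all $f(N)\prec N/\log N$ rather than only $f(N)\prec N^{1-\alpha}$.

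Two slips should be corrected. First, your repackaging via ``not a tree'' rests on the false assertion that spherical Artin groups have forests as defining graphs: that is the Coxeter--Dynkin diagram convention, not the defining-graph convention of this paper, in which a spherical Artin group has a \emph{complete} defining graph (the triangle with labels $2,3,3$ is spherical, being the braid group on four strands), so ``connected and contains a cycle $\Rightarrow$ infinite type'' is wrong. Fortunately the detour is unnecessary: a graph on at least two vertices that is not a join cannot be complete, hence cannot be spherical, so ``not a join'' alone places you in the scope of the structural results (and a disconnected graph is automatically an irreducible non-cone, or can be handled by your free-product remark); you should drop the tree/triangle-counting step, whose ``concentration'' claim is in any case unsupported without a second-moment argument. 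Relatedly, it is safer to quote the structural input exactly as in the literature -- the results used in this paper (Charney--Morris-Wright, Theorem 3.3, and Kato--Oguni, Theorem 1.4) assume ``irreducible, not a cone, at least three vertices'', which your ``not a join'' condition implies. Second, your closing remark that the conclusion \emph{fails} once $f(N)\succ N/\log N$ because $\Gamma$ is a.a.s.\ a cone is incorrect: being a cone only means this particular criterion does not apply, not that acylindrical hyperbolicity or triviality of the centre is lost; indeed Corollary \ref{Coro7Conj} shows that for $f(N)\succ N^{3/2}$ the same conclusions hold a.a.s.\ even though such graphs are a.a.s.\ cones. Your threshold $N/\log N$ is sharp for the property ``not a join'', not for the theorem's conclusion.
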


Even though the order on (non-decreasing divergent) function is not total, the results of the above theorems for polynomial functions can be encapsulated in Figure \ref{fig:order_and_properties}.
\medskip

\begin{figure}
\label{fig:order_and_properties}
    \centering
    \includegraphics[scale=0.6]{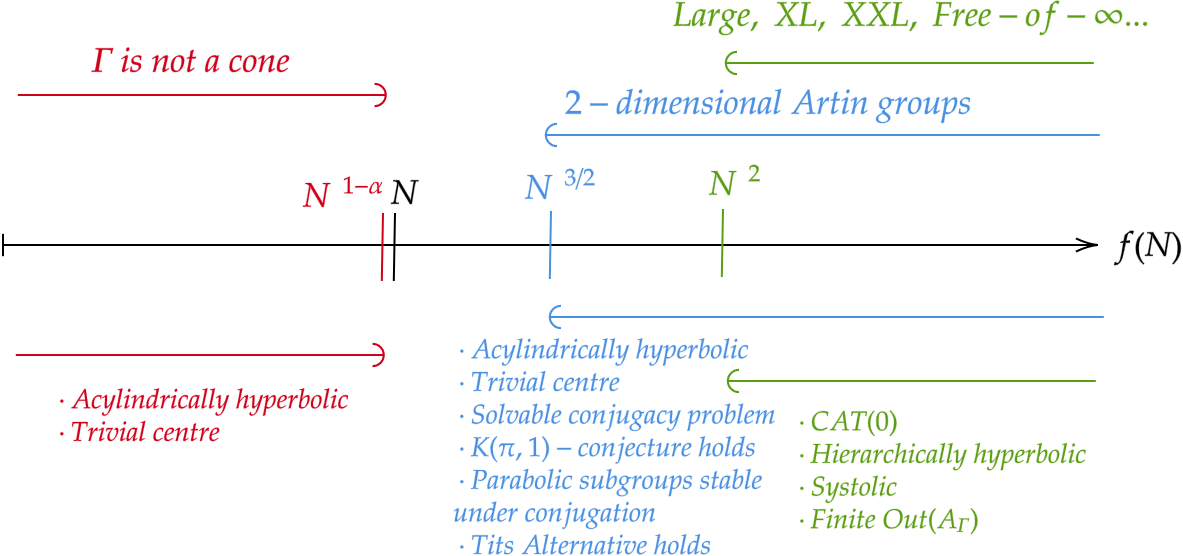}
    \caption{The axis represents various (polynomial) functions $f$. Above the main axis are described the classes of Artin groups that we obtain asymptotically almost surely relatively to $f$, while under this axis we list the properties that we know these groups will satisfy asymptotically almost surely.}
    \label{fig:my_label}
\end{figure}

The previous results shows that we are very close to being able to state that “almost all Artin groups are acylindrically hyperbolic and have trivial centres”. That said, there is a small range of functions for which no probabilistic result regarding acylindrical hyperbolicity or centres being trivial can be stated. In light of that, we raise the following problem:

\begin{problem}
    Construct a family $A_\mathcal{F}$ of acylindrically hyperbolic Artin groups or of Artin groups with trivial centres for which the following holds:
    \\There exists an $\alpha \in (0,1)$ such that for all functions $f : \mathbf{N} \rightarrow \mathbf{N}$ satisfying $N^{1-\alpha} \preccurlyeq f(N) \preccurlyeq N^{3/2}$, a graph $\Gamma$ picked at random in $\mathcal{G}^{N, f(N)}$ is such that $A_{\Gamma}$ asymptotically almost surely belongs to $A_\mathcal{F}$.
\end{problem}

\subsection*{Acknowledgements}

The authors would like to thank Alessandro Sisto for helpful conversations and for pointing out the strategy of the second moment method and Alexandre Martin for useful comments. We would also like to thank the Institut Henri Poincaré (UAR 839 CNRS-Sorbonne Université), and LabEx CARMIN (ANR-10-LABX-59-01) for providing a warm environment for this project to begin. The work of the first author was supported by the EPSRC-UKRI studentship EP/V520044/1.

\section{Preliminaries and first results.}

In this section we bring more details about some of the notions discussed in the introduction. This includes discussions about most of the commonly studied classes of Artin groups, as well as discussions regarding open conjectures related to Artin groups.

Throughout this paper, we will often call a \textbf{triangle} in a graph $\Gamma$ any subgraph of $\Gamma$ that is generated by 3 vertices. This notation will be convenient, although one must note that with this definition, triangles may have strictly fewer than $3$ edges, as subgraphs of $\Gamma$.
\medskip


Most of the main conjectures about Artin groups are still open in general. That said, many of them have been proved for smaller families of Artin groups. Two important of these families are the families of 2-dimensional Artin groups and the family of Artin groups of type FC. These two families have been extensively studied following the work of Charney and Davis (see \cite{CharneyDavis2}). The other well-studied families are usually sub-families of these. 

Before coming to these definitions, we first recall what a parabolic subgroup of an Artin group is. Let $A_{\Gamma}$ be any Artin group, and let $\Gamma'$ be a full subgraph of $\Gamma$. A standard result about Artin groups states that the subgroup of $A_{\Gamma}$ generated by the vertices of $\Gamma'$ is also an Artin group, that is isomorphic to $A_{\Gamma'}$ (\cite{van1983homotopy}). Such a subgroup is called a \textbf{standard parabolic subgroup} of $A_{\Gamma}$. The conjugates of these subgroups are called the \textbf{parabolic subgroups} of $A_{\Gamma}$.


\begin{definition}
\label{defn:2dim}
(0) An Artin group $A_{\Gamma}$ is said to be \textbf{spherical} if the associated Coxeter group $W_{\Gamma}$ is finite.

(1) An Artin group $A_{\Gamma}$ is said to be \textbf{2-dimensional} if for every triplet of distinct standard generators $a, b, c \in V(\Gamma)$, the subgraph $\Gamma'$ spanned by $a$, $b$ and $c$ corresponds to an Artin group $A_{\Gamma'}$ that is \textbf{not} spherical. By a result of (\cite{CharneyDavis2}), this is equivalent to requiring that
$$\frac{1}{m_{ab}} + \frac{1}{m_{ac}} + \frac{1}{m_{bc}} \leq 1.$$
The family of $2$-dimensional Artin groups contains the well-studied families of \textbf{large-type} Artin groups (every coefficient is at least $3$), \textbf{extra-large-type} Artin groups (every coefficient is at least $4$), or \textbf{XXL} Artin groups (every coefficient is at least $5$).

(2) An Artin group $A_{\Gamma}$ is said to be of \textbf{type FC} if every complete subgraph $\Gamma' \subseteq \Gamma$ generates an Artin group $A_{\Gamma'}$ that is spherical. The family of Artin groups of type FC contains the family of right-angled Artin groups, also called \textbf{RAAGs} (the only permitted coefficients are $2$ and $\infty$), the family of spherical Artin groups, and the family of \textbf{triangle-free} Artin groups (the Artin groups whose associated graphs don't contain any $3$-cycles). Being triangle-free is actually equivalent to being both of type $FC$ and $2$-dimensional.
\end{definition}

We now move towards the main conjectures related to Artin groups. For each conjecture, we will briefly describe the state of the common research towards proving it, by mentioning the one or two result(s) that will turn out to be the more “probabilistically relevant” in our model. In other words, the results that cover the largest classes.


\begin{conjecture} \label{Conj1to11}
Let $A_{\Gamma}$ be any Artin group. Then:
\begin{enumerate}

  \item  $A_{\Gamma}$ is torsion-free.

$\hookrightarrow$ This was proved for $2$-dimensional Artin groups (\cite{CharneyKPI}).

  \item  If $A_{\Gamma}$ is irreducible and non-spherical, then $A_{\Gamma}$ has trivial centre.

$\hookrightarrow$ This was proved for $2$-dimensional Artin groups (\cite{Vaskou}), and for Artin groups whose graph is not the cone of a single vertex (\cite{CMW}).

  \item  $A_{\Gamma}$ has solvable word and conjugacy problems.

$\hookrightarrow$ This was proved for $2$-dimensional Artin groups (\cite{systolic})

  \item  $A_{\Gamma}$ satisfies the $K(\pi, 1)$-conjecture.

$\hookrightarrow$ This was proved for $2$-dimensional Artin groups (\cite{CharneyKPI}).

  \item  Intersections of parabolic subgroups of $A_{\Gamma}$ give parabolic subgroups of $A_{\Gamma}$.

$\hookrightarrow$ This was proved for large-type Artin groups (\cite{cumplido_martin_vaskou_2022}) and more generally for $(2, 2)$-free $2$-dimensional Artin groups (\cite{Blufstein}).

  \item $A_{\Gamma}$ is CAT(0).

$\hookrightarrow$ This was proved for XXL Artin groups (\cite{Haettel}).

  \item  If $A_{\Gamma}$ is irreducible and non-spherical, then $A_{\Gamma}$ is acylindrically hyperbolic.

$\hookrightarrow$ This was proved for $2$-dimensional Artin groups ( \cite{Vaskou}), and for Artin groups whose graph is not the cone of a single vertex (\cite{KatoOguni})

  \item  $A_{\Gamma}$ is hierarchically hyperbolic.

$\hookrightarrow$ This was proved for extra-large Artin groups.\cite{HHGS}

  \item  $A_{\Gamma}$ is systolic and biautomatic.

$\hookrightarrow$ This was proved for large-type Artin groups. \cite{huangosajda}

  \item  $A_{\Gamma}$ satisfies the Tits Alternative.

$\hookrightarrow$ This was proved for $2$-dimensional Artin groups (\cite{Martin}).

In addition to these conjectures, the following question has been raised:

  \item  When is $Out(A_{\Gamma})$ finite?

$\hookrightarrow$ This was proved to be the case for large-type free-of-infinity Artin groups (\cite{VaskouFreeofinfty}).
\end{enumerate}
\end{conjecture}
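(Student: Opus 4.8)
Since the statement collects the principal open conjectures about Artin groups, in full generality each item is unsolved, and what a proposal can realistically organise is a family-by-family verification of the kind recorded in the annotations. The guiding principle is that every listed property is a consequence of a sufficiently well-behaved geometric action, so the plan is to exhibit, for each of the major subfamilies, a non-positively-curved or hyperbolic-like model on which $A_\Gamma$ acts, and then read off the desired conclusion from that model. I would not attempt a single uniform argument; instead I would partition the items according to which combinatorial constraint (the triangle inequality of Definition \ref{defn:2dim}, the type-FC condition, or a lower bound on the coefficients) makes the relevant model available.

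First I would treat the $2$-dimensional case, governed by $\frac{1}{m_{ab}}+\frac{1}{m_{ac}}+\frac{1}{m_{bc}}\le 1$ on every triangle. Here the Deligne complex carries a $\mathrm{CAT}(0)$ piecewise-Euclidean metric (Charney--Davis), which yields the $K(\pi,1)$-conjecture (item 4) from contractibility and torsion-freeness (item 1) from the fixed-point argument for finite subgroups. Combining this with the systolic geometry of Huang--Osajda gives solvability of the word and conjugacy problems (item 3), while the acylindrical action on the hyperbolic complex extracted from the Deligne complex (Vaskou) delivers acylindrical hyperbolicity and triviality of the centre in the irreducible non-spherical case (items 7 and 2). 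The Tits alternative (item 10) follows from Martin's analysis of the same action. Thus one model simultaneously settles items 1, 3, 4 and, under irreducibility, 2, 7 and 10 for every $2$-dimensional group.

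The remaining items I would handle by separate, self-contained geometric arguments tailored to each subfamily, since no single model is known to cover them. For large-type groups the systolic complex of Huang--Osajda gives biautomaticity (item 9), and together with Cumplido--Martin--Vaskou (extended by Blufstein to the $(2,2)$-free $2$-dimensional case) the closure of parabolics under intersection (item 5); for $\mathrm{XXL}$ groups Haettel's construction supplies a $\mathrm{CAT}(0)$ structure (item 6); for extra-large-type groups the associated hierarchically hyperbolic structure yields item 8; and for large-type free-of-infinity groups the rigidity results of Vaskou give finiteness of $Out$ (item 11). Each argument exploits directly the combinatorial hypothesis defining its subfamily, so the verification is modular rather than unified.

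The main obstacle, and the reason the statement is phrased as a conjecture rather than a theorem, is that there is no known non-positively-curved model valid for an arbitrary $A_\Gamma$. The Deligne complex is expected to be $\mathrm{CAT}(0)$ exactly when the $K(\pi,1)$-conjecture holds, so the most natural unifying route is circular. Moreover one cannot simply bootstrap from the $2$-dimensional case, because the presence of spherical triangles, where the triangle inequality of Definition \ref{defn:2dim} fails, destroys both the flag condition underlying systolicity and the nonpositive curvature of the Euclidean cells, and these configurations proliferate as the rank grows. Supplying a model, or a direct combinatorial argument, that survives the appearance of spherical triangles in arbitrary rank is precisely the gap that keeps every item of the statement open in general.
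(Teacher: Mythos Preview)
The statement you were given is not a theorem but a \emph{conjecture}: the paper labels it \texttt{Conjecture~\ref{Conj1to11}} and provides no proof, only the annotations (the $\hookrightarrow$ lines) recording for which subfamilies each item has been established in the literature. There is therefore no ``paper's own proof'' to compare your proposal against.

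Your proposal handles this correctly. You recognise at the outset that each item is open in general, and your organisation---grouping items by which combinatorial hypothesis (the $2$-dimensional triangle inequality, large/extra-large/XXL coefficient bounds, free-of-infinity) makes a workable geometric model available---is exactly the structure underlying the paper's annotations. Your attributions match the paper's: Charney--Davis for the $K(\pi,1)$ and torsion-freeness in dimension~$2$, Vaskou and Kato--Oguni for acylindrical hyperbolicity and centres, Huang--Osajda for systolicity and biautomaticity in large type, Haettel for $\mathrm{CAT}(0)$ in XXL, Cumplido--Martin--Vaskou and Blufstein for parabolic intersections, Martin for the Tits alternative, and Vaskou for $\mathrm{Out}$ in the large free-of-infinity case. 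Your closing paragraph on why no uniform argument is available (no known nonpositively curved model for arbitrary $A_\Gamma$, circularity via the Deligne complex, failure of flag/curvature conditions once spherical triangles appear) is an accurate diagnosis of the obstruction and goes somewhat beyond what the paper itself says.

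In short: there is nothing to prove here, you correctly identified that, and your survey of the partial results is faithful to the paper's own summary.
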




\begin{definition}
Let $\mathcal{F}$ be a family of defining graphs and let $A_{\mathcal F}$ be the corresponding class of Artin groups. Let $f : \mathbf{N} \rightarrow \mathbf{N}$ be a non-decreasing divergent function. We define the probability that an Artin group $A_\Gamma$ picked at random (relatively to $f$) belongs to $A_\mathcal{F}$ as the following limit, when it exists:
$$\mathbb{P}_f \big[A_\Gamma \in A_{\mathcal{F}}\big] \coloneqq \lim_{N \rightarrow \infty} \mathbb{P} \big[\Gamma \in \mathcal{F} \ | \ \Gamma \in \mathcal{G}^{N, f(N)} \big] = \lim_{N \rightarrow \infty} \frac{\#(\mathcal{F} \cap \mathcal{G}^{N, f(N)})}{\#(\mathcal{G}^{N, f(N)})}.$$
Furthermore, we say that an Artin group $A_\Gamma$ picked at random (relatively to $f$) is \textbf{asymptotically almost surely} in $A_\mathcal{F}$ if $\mathbb{P}_f \big[A_\Gamma \in A_\mathcal{F} \big] = 1$. Similarly, we say that $A_\Gamma$ is \textbf{asymptotically almost surely not} in $A_\mathcal{F}$ if $\mathbb{P}_f \big[A_\Gamma \in A_\mathcal{F} \big] = 0$.
\end{definition}

\begin{definition}
Let $A_{\mathcal{F}}$ be a family of Artin groups. Then we say that $A_{\mathcal{F}}$ is \textbf{uniformly large} if for every non-decreasing divergent function $f : \mathbf{N} \rightarrow \mathbf{N}$, an Artin group $A_{\Gamma}$ picked at random (relatively to $f$) is asymptotically almost surely in $A_{\mathcal{F}}$. We say that $\mathcal{F}$ is \textbf{uniformly small} if $A_{\Gamma}$ is asymptotically almost surely not in $A_{\mathcal{F}}$.
\end{definition}

We now move towards our first results. The first thing we will proved is that the family of irreducible Artin groups and the family of Artin groups with connected defining graphs are uniformly large. This is important as many results regarding Artin groups assume that the corresponding groups are irreducible and/or have a connected defining graph. Our work show that these two hypotheses are very much not restrictive.

\begin{definition}
Let $\Gamma_1$ and $\Gamma_2$ be two defining graphs. The graph $\Gamma_1 *_k \Gamma_2$ is the graph obtained by attaching every vertex of $\Gamma_1$ to every vertex of $\Gamma_2$ by an edge with label $k$ (with $k \in \{\infty, 2, 3, \cdots \}$).

Let now $\Gamma$ be any defining graph. Then $\Gamma$ is called a \textbf{k-join} relatively to $\Gamma_1$ and $\Gamma_2$ is there are two subgraphs $\Gamma_1, \Gamma_2 \subseteq \Gamma$ such that $V(\Gamma_1) \sqcup V(\Gamma_2) = V(\Gamma)$ and such that $\Gamma = \Gamma_1 *_k \Gamma_2$.

We will denote by $\mathcal A_{\mathcal{J}_k}$ the class of Artin groups whose defining graphs decompose as $k$-joins.
\end{definition}

\begin{remark} \label{RemarkConnectedIrreducible}
    (1) If $\Gamma \in \mathcal{J}_2$ then $A_{\Gamma}$ decomposes as a direct product $A_{\Gamma_1} \times A_{\Gamma_2}$ in an obvious way. In that case, $\Gamma$ is called \textbf{reducible}. The class $\mathcal{J}_2^C$ of \textbf{irreducible} defining graphs will be denoted $\mathbf{Irr}$.
    \\(2) If $\Gamma \in \mathcal{J}_{\infty}$ then it is disconnected. The class $\mathcal{J}_{\infty}^C$ of connected defining graphs will be denoted $\mathbf{Con}$.
\end{remark}

\begin{lemma}
	\label{lem:being_reducible_uniformly small}
	The family $\mathcal A_{\mathcal{J}_k}$ is uniformly small. In particular, the classes $A_{Irr}$ and $A_{Con}$ of Artin groups are both uniformly large.
\end{lemma}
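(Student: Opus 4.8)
The plan is to establish the first assertion — that $\mathcal{A}_{\mathcal{J}_k}$ is uniformly small — via a first-moment (union bound) estimate over all possible $k$-join decompositions, and then to deduce the statements about $A_{Irr}$ and $A_{Con}$ by passing to complements. The starting point is that $\#\mathcal{G}^{N,M} = M^{\binom{N}{2}}$, since a defining graph on the fixed vertex set $\{1,\dots,N\}$ is nothing but a choice of label in the $M$-element set $\{\infty,2,\dots,M\}$ for each of the $\binom{N}{2}$ possible edges.

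Next, fix an ordered partition $V(\Gamma) = V_1 \sqcup V_2$ of the vertex set with $|V_1| = j$ and $1 \le j \le N-1$. A graph of $\mathcal{G}^{N,M}$ that is a $k$-join relatively to this partition is exactly one whose $j(N-j)$ edges between $V_1$ and $V_2$ all carry the label $k$, the remaining $\binom{j}{2} + \binom{N-j}{2}$ edges being arbitrary; there are $M^{\binom{j}{2}+\binom{N-j}{2}}$ of those. A given graph may be a $k$-join along several partitions, so summing over all partitions only over-counts, and
$$\#\big(\mathcal{J}_k \cap \mathcal{G}^{N,M}\big) \ \le\ \sum_{j=1}^{N-1} \binom{N}{j}\, M^{\binom{j}{2}+\binom{N-j}{2}}.$$
Using the elementary identity $\binom{N}{2} - \binom{j}{2} - \binom{N-j}{2} = j(N-j)$ and dividing by $\#\mathcal{G}^{N,M} = M^{\binom{N}{2}}$, this gives
$$\mathbb{P}\big[\Gamma \in \mathcal{J}_k \ \big|\ \Gamma \in \mathcal{G}^{N,M}\big] \ \le\ \sum_{j=1}^{N-1} \binom{N}{j}\, M^{-j(N-j)}.$$

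To finish, I would bound this sum crudely. By the symmetry $j \leftrightarrow N-j$ (both $\binom{N}{j}$ and $j(N-j)$ are invariant), the right-hand side is at most $2\sum_{j=1}^{\lfloor N/2\rfloor} \binom{N}{j}\, M^{-j(N-j)}$; and for $j \le N/2$ one has $N-j \ge N/2$, hence $j(N-j) \ge jN/2$, while $\binom{N}{j} \le N^j$, so the $j$-th term is at most $\big(N/M^{N/2}\big)^{j}$. As $f$ is divergent, $M = f(N) \to \infty$, so $N/M^{N/2} \to 0$; thus for $N$ large the geometric series converges and the whole quantity is $O\big(N/M^{N/2}\big) \to 0$. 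Hence $\mathbb{P}_f[A_\Gamma \in \mathcal{A}_{\mathcal{J}_k}] = 0$ for every admissible $f$, which is exactly the statement that $\mathcal{A}_{\mathcal{J}_k}$ is uniformly small. The ``in particular'' then follows at once: by Remark \ref{RemarkConnectedIrreducible} we have $\mathbf{Irr} = \mathcal{J}_2^C$ and $\mathbf{Con} = \mathcal{J}_\infty^C$, so applying the above with $k = 2$ and with $k = \infty$ and using $\mathbb{P}_f[A_\Gamma \in A_{\mathcal{F}^C}] = 1 - \mathbb{P}_f[A_\Gamma \in A_{\mathcal{F}}]$ shows that $A_{Irr}$ and $A_{Con}$ are uniformly large.

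I do not expect a genuine obstacle here; the single point requiring care is that the union bound over partitions really does over-count graphs with more than one $k$-join decomposition, which is harmless since only an upper bound on the probability is needed. The rest is bookkeeping: keeping the exponent identity $\binom{N}{2}-\binom{j}{2}-\binom{N-j}{2}=j(N-j)$ straight and picking the estimates $\binom{N}{j}\le N^j$ and $j(N-j)\ge jN/2$ loose enough to sum in closed form yet strong enough that the exponential factor $M^{-N/2}$ overwhelms the $\binom{N}{j}$ term.
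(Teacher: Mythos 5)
Your proposal is correct and follows essentially the same route as the paper: a union bound over the $\binom{N}{j}$ possible vertex-partitions of each size, the exponent $j(N-j)$ for the forced cross-edges, the crude bounds $\binom{N}{j}\le N^j$ (the paper uses $(Ne/j)^j$) and $j(N-j)\ge jN/2$ for $j\le N/2$, and a geometric-series estimate showing the probability is $O\bigl(N/f(N)^{N/2}\bigr)\to 0$, with the statements for $A_{Irr}$ and $A_{Con}$ following by complementation via Remark \ref{RemarkConnectedIrreducible}. The only cosmetic difference is that you sum over $j\le N-1$ and then invoke the symmetry $j\leftrightarrow N-j$, whereas the paper restricts to $j\le\lfloor N/2\rfloor$ from the outset.
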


\begin{proof}
	We will count the number of decompositions of the graph $\Gamma$ as $\Gamma = \Gamma_1 *_k \Gamma_2$. Without loss of generality, we will let $\Gamma_1$ denote the subgraph with the lower rank, so that $| V(\Gamma_1) | \leq \lfloor N/2 \rfloor$. Let $f : \mathbf{N} \rightarrow \mathbf{N}$ be a non-decreasing divergent function and consider the family $\mathcal J_k$. For a given $N \geq 1$, we have:
		
		\begin{align*}
		\begin{split}
			\mathbb P \big[ \Gamma \in \mathcal{J}_k \ | \ \Gamma \in \mathcal{G}^{N, f(N)} \big] & = \mathbb P\big[ \exists \ \Gamma_1, \Gamma_2 \quad \text{with} \quad  | V(\Gamma_1) | \leq N/2  \quad \text{such that} \quad  \Gamma = \Gamma_1 *_k \Gamma_2 \ | \ \Gamma \in \mathcal{G}^{N, f(N)} \big] \\
			\leq & \sum_{j=1}^{\lfloor N/2 \rfloor} \mathbb P\big[ \exists \ \Gamma_1, \Gamma_2 \quad \text{with} \quad  | V(\Gamma_1) | = j  \quad \text{such that} \quad  \Gamma = \Gamma_1 *_k \Gamma_2 \ | \ \Gamma \in \mathcal{G}^{N, f(N)} \big] \\
			= & \sum_{j=1}^{\lfloor N/2 \rfloor} {N \choose j} \left(\frac{1}{f(N)}\right)^{j(N-j)} \\
			\leq & \sum_{j=1}^{\lfloor N/2 \rfloor} \left(\frac{Ne}{jf(N)^{N/2}}\right)^{j} \\
			\leq \ & \frac{Ne}{f(N)^{N/2}} \cdot \left(\frac{1-\left(\frac{Ne}{f(N)^{N/2}}\right)^{N/2+1}}{1-\frac{Ne}{f(N)^{N/2}}} \right) 
		\end{split}
	\end{align*}

	
where we used the bound ${N \choose j} \leq \left(\frac{Ne}{j}\right)^j$. Now $\lim_{N\to \infty} \frac{Ne}{f(N)^{N/2}} =0$ for any non-decreasing divergent function $f$, so we obtain
$$\mathbb{P}_f \big[ A_{\Gamma} \in A_{\mathcal{J}_k} \big] = \lim_{N \rightarrow \infty} \mathbb P \big[ \Gamma \in \mathcal{J}_k \ | \ \Gamma \in \mathcal{G}^{N, f(N)} \big] = 0 \cdot \left( \frac{1-0}{1-0} \right) = 0.$$
This proves the main statement of the lemma. The second statement then directly follows from Remark \ref{RemarkConnectedIrreducible}.
\end{proof}

Our next result concerns the class of Artin groups of type FC.

\begin{lemma}
    Let $A_{\mathcal{FC}}$ be the family of Artin groups of type FC. Then $A_{\mathcal{FC}}$ is uniformly small. In particular, the family of triangle-free Artin groups, the family of spherical Artin groups and the family of RAAGs are also uniformly small.
\end{lemma}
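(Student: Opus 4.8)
The plan is to show that, asymptotically almost surely, $\Gamma$ contains a complete triangle all of whose coefficients are at least $3$, and that such a triangle obstructs type FC. First I would record an elementary observation: if $a,b,c \in V(\Gamma)$ are such that the subgraph $\Gamma'$ they span is a complete triangle with $m_{ab}, m_{ac}, m_{bc} \geq 3$, then
\[
\frac{1}{m_{ab}} + \frac{1}{m_{ac}} + \frac{1}{m_{bc}} \leq \frac13 + \frac13 + \frac13 = 1,
\]
so by the result of \cite{CharneyDavis2} recalled in Definition \ref{defn:2dim} the standard parabolic subgroup $A_{\Gamma'}$ is \emph{not} spherical. Since $\Gamma'$ is a complete subgraph of $\Gamma$, this already prevents $A_{\Gamma}$ from being of type FC.

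Next I would bound the probability that no such triangle exists in $\Gamma$. Fix a non-decreasing divergent function $f$ and take $N$ large enough that $M \coloneqq f(N) \geq 3$. Since the $\binom{N}{2}$ edge-labels are chosen independently and uniformly in $\{\infty, 2, 3, \dots, M\}$, for a fixed triple of vertices the probability that all three of its edges get a label in $\{3, \dots, M\}$ is $\big(\tfrac{M-2}{M}\big)^3 \geq \tfrac1{27}$. Restricting attention to a fixed family of $\lfloor N/3 \rfloor$ pairwise vertex-disjoint triples, the corresponding ``bad triangle'' events depend on disjoint sets of edges and are therefore mutually independent, so the probability that none of them occurs is at most $\big(\tfrac{26}{27}\big)^{\lfloor N/3 \rfloor}$. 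Consequently, for all large $N$, the probability that $A_\Gamma$ is of type FC is at most $\big(\tfrac{26}{27}\big)^{\lfloor N/3 \rfloor}$, which tends to $0$; hence $\mathbb{P}_f\big[A_\Gamma \in A_{\mathcal{FC}}\big] = 0$ for every non-decreasing divergent $f$, i.e.\ $A_{\mathcal{FC}}$ is uniformly small. The remaining assertions follow at once: by Definition \ref{defn:2dim} the families of triangle-free, spherical, and right-angled Artin groups are all contained in $A_{\mathcal{FC}}$, and a subfamily of a uniformly small family is uniformly small.

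I do not expect a genuine obstacle here. The only point deserving a remark is the degenerate regime in which $f(N)$ still equals $2$: there $\Gamma$ defines a RAAG and the obstruction above is vacuous, but since $f$ is divergent this happens for only finitely many $N$ and does not affect the limit. One could alternatively work with all $\binom{N}{3}$ triangles via a second-moment estimate, but the vertex-disjoint trick keeps the independence completely transparent and is amply sufficient here.
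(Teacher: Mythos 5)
Your proof is correct, and it rests on exactly the same obstruction as the paper's: a complete triangle whose three labels all lie in $\{3,\dots,f(N)\}$ satisfies $\tfrac{1}{m_{ab}}+\tfrac{1}{m_{ac}}+\tfrac{1}{m_{bc}}\le 1$, hence is a complete subgraph spanning a non-spherical parabolic, which rules out type FC; the deduction for the subfamilies (triangle-free, spherical, RAAGs) is also as in the paper. Where you genuinely differ is in the probabilistic estimate. The paper looks at a \emph{single} fixed triple and uses only the divergence of $f$: the probability that its three labels avoid $2$ and $\infty$ is $\big(\tfrac{f(N)-2}{f(N)}\big)^3$, which already tends to $1$, so no counting over triples is needed at all. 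You instead take a constant lower bound of $1/27$ per triple (valid once $f(N)\ge 3$) and exploit independence over $\lfloor N/3\rfloor$ vertex-disjoint triples to get the upper bound $(26/27)^{\lfloor N/3\rfloor}$ on the probability of being FC. Your route is slightly longer but buys two things: a quantitatively stronger, exponentially small bound on $\mathbb P\big[\Gamma\in\mathcal{FC}\mid\Gamma\in\mathcal G^{N,f(N)}\big]$ rather than just a limit equal to $0$, and robustness in $f$ --- it only needs $f(N)\ge 3$ for large $N$, not $f$ divergent, so it would even handle bounded label sets, whereas the paper's one-triple argument genuinely uses $f(N)\to\infty$. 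Your remark about the degenerate range where $f(N)=2$ is the right thing to say and is handled correctly.
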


\begin{proof}
    Let $f$ be any non-decreasing divergent function, and let $\Gamma \in \mathcal{G}^{N,f(N)}$. We want to compute the probability that $\Gamma$ belongs to $\mathcal{FC} \cap \mathcal{G}^{N,f(N)}$. Let $a$, $b$ anc $c$ be three vertices of $\Gamma$. The probability that any of the three corresponding coefficients $m_{ab}$, $m_{ac}$ and $m_{bc}$ is not $2$ nor $\infty$ is precisely $\frac{f(N)-2}{f(N)}$, and hence the probability that the three coefficients are not $2$ nor $\infty$ is $\left( \frac{f(N)-2}{f(N)} \right)^3$. Note that when this happens, the subgraph $\Gamma' \subseteq \Gamma$ spanned by $a$, $b$ and $c$ is complete but generates an Artin group $A_{\Gamma'}$ which is non-spherical (the sum of the inverses of the three corresponding coefficients is $\leq 1$). In particular, $\Gamma$ is not of type FC. We obtain
    $$\mathbb{P}_f\big[A_\Gamma \notin A_\mathcal{FC}\big] = \lim_{N \rightarrow \infty} \frac{\#(\mathcal{G}^{N, f(N)} \backslash \mathcal{FC})}{\#(\mathcal{G}^{N, f(N)})}
    \geq \lim_{N \rightarrow \infty}\left( \frac{f(N)-2}{f(N)} \right)^3 = \lim_{N \rightarrow \infty} \left( 1 - \frac{2}{f(N)} \right)^3 = 1.$$
\end{proof}



As mentioned in the introduction, there are interesting classes of Artin groups for which the probability that a graph taken at random will belong to the class highly depends on the choice of function $f$. Some examples are given through the following theorem.

\begin{theorem} \label{Theorem_N^2}
    Let $A_\mathcal{F}$ be any family of Artin groups defined by forbidding from their graphs a finite number $k$ of coefficients, and consider a function $f : \mathbf{N} \rightarrow \mathbf{N}$. Let $A_\Gamma$ be an Artin group picked at random (relatively to $f$). Then:
    \begin{enumerate}
    \item  If $f(N) \succ N^2$, then $A_{\Gamma}$ asymptotically almost surely belongs to $A_\mathcal{F}$.
    \item If $f(N) \prec N^2$, then $A_{\Gamma}$ asymptotically almost surely does not belong to $A_\mathcal{F}$.
    \item If $f(N)=N^2$ then asymptotically we have $\mathbb P_f \big[A_\Gamma \in A_\mathcal{F}\big] = e^{-k/2}$.
    \end{enumerate}
\end{theorem}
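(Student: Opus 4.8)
The plan is to reduce the statement to a counting problem about how many edges of a random graph in $\mathcal{G}^{N,f(N)}$ carry one of the $k$ forbidden coefficients, and then to analyse the three regimes separately by elementary asymptotics. First I would fix the $k$ forbidden coefficients; since $f$ is divergent, for $N$ large enough all of them lie in $\{\infty, 2, \ldots, f(N)\}$, so each of the $\binom{N}{2}$ potential edges of $\Gamma$ independently and uniformly takes one of $f(N)+1$ values (the $f(N)-1$ finite coefficients $2,\ldots,f(N)$, plus $\infty$; I would be careful about whether the model counts $\infty$ as a value, but it only shifts $f(N)$ by a bounded additive constant and does not affect any of the three limits). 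The probability that a given edge does \emph{not} carry a forbidden coefficient is $p_N := 1 - k/(f(N)+O(1))$, and by independence $\mathbb{P}[\Gamma \in \mathcal{F} \mid \Gamma \in \mathcal{G}^{N,f(N)}] = p_N^{\binom{N}{2}}$.

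Next I would take logarithms: $\log \mathbb{P}[\Gamma \in \mathcal{F}] = \binom{N}{2}\log\!\big(1 - \tfrac{k}{f(N)+O(1)}\big)$. Using $\log(1-x) = -x + O(x^2)$, valid since $f(N)\to\infty$ forces $k/f(N)\to 0$, this is
\[
-\binom{N}{2}\cdot\frac{k}{f(N)} + \binom{N}{2}\cdot O\!\left(\frac{1}{f(N)^2}\right) = -\frac{k}{2}\cdot\frac{N(N-1)}{f(N)} + O\!\left(\frac{N^2}{f(N)^2}\right).
\]
Now the three cases fall out. If $f(N)\succ N^2$, then $N^2/f(N)\to 0$ and the whole exponent tends to $0$, so the probability tends to $1$: this gives (1). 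If $f(N)\prec N^2$, then $N(N-1)/f(N)\to\infty$ (and the error term is dominated by the main term precisely because $N^2/f(N)^2 = (N^2/f(N))\cdot(1/f(N))$ and $1/f(N)\to 0$), so the exponent tends to $-\infty$ and the probability tends to $0$: this gives (2). If $f(N) = N^2$, then $N(N-1)/f(N) = 1 - 1/N \to 1$ while the error term is $O(1/N^2)\to 0$, so the exponent tends to $-k/2$ and the probability tends to $e^{-k/2}$: this gives (3).

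The only genuinely delicate point — and the one I would write out carefully rather than wave at — is the control of the error term $O(N^2/f(N)^2)$ in case (2): one must confirm that it is $o(N^2/f(N))$ uniformly, which is immediate from $1/f(N)\to 0$, but it is worth stating since $\prec$ only gives $N^2/f(N)\to\infty$ without a rate. A second minor subtlety is bookkeeping around $\infty$ as an allowed label and whether forbidding $\infty$ itself is permitted among the $k$ coefficients; in all cases replacing $f(N)$ by $f(N)+c$ for a fixed constant $c$ leaves every limit above unchanged, so the argument is robust. Everything else is routine, and no probabilistic machinery beyond independence of the edge-labels is needed.
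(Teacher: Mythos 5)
Your proposal is correct and takes essentially the same route as the paper: both reduce to the exact identity $\mathbb{P}\big[\Gamma\in\mathcal{F}\mid \Gamma\in\mathcal{G}^{N,f(N)}\big]=\left(\frac{f(N)-k}{f(N)}\right)^{\binom{N}{2}}$ (in the paper's model there are exactly $f(N)$ labels, so your ``$f(N)+1$'' is a harmless off-by-one which, as you note, does not affect any limit) and then analyse the three regimes. The only difference is cosmetic: you evaluate the limit via logarithms and $\log(1-x)=-x+O(x^2)$, while the paper substitutes $f(N)=h(N)N^2$ (resp.\ $f(N)h(N)=N^2$) and uses the standard exponential limit; the asymptotics are the same.
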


\begin{proof}
A graph with $N$ vertices has $\frac{N(N-1)}{2}$ pairs of vertices, each of which is given one of $f(N)$ possible coefficients. Hence, direct computations on the possible number of graphs give 
$$\# \mathcal G^{N, f(N)}  = (f(N))^{\frac{N(N-1)}{2}}.$$
Similarly, we have
$$\# (\mathcal{F} \cap \mathcal G^{N, f(N)})  = (f(N)-k)^{\frac{N(N-1)}{2}}.$$
And thus we obtain
$$\mathbb{P}_f\big[A_\Gamma \in A_\mathcal{F}\big] = \lim_{N \rightarrow \infty} \frac{\#(\mathcal{F} \cap \mathcal G^{N, f(N)})}{\#(\mathcal{G}^{N, f(N)})} = \lim_{N \rightarrow \infty} \left(\frac{f(N)-k}{f(N)}\right)^{\frac{N(N-1)}{2}} = \lim_{N \rightarrow \infty} \left(\frac{f(N)-k}{f(N)}\right)^{N^2 \left(\frac{N-1}{2N} \right)}.$$
If $f(N) \succ N^2$, there is a function $h$ with $\lim_{N \rightarrow \infty} h(N) = \infty$ such that $f(N)=h(N)N^2$ and hence $$\mathbb{P}_f\big[A_\Gamma \in A_\mathcal{F}\big] = \lim_{N \rightarrow \infty} \left(\frac{f(N)-k}{f(N)}\right)^{f(N) \left( \frac{N-1}{2Nh(N)} \right)} = \lim_{N \rightarrow \infty} e^{-k \left( \frac{N-1}{2Nh(N)} \right)} = 1.$$

\noindent If $f(N) \prec N^2$, there exists a function $h$ with $\lim_{N \rightarrow \infty} h(N) = \infty$ such that $f(N)h(N)=N^2$ and

$$\mathbb{P}_f\big[A_\Gamma \in A_\mathcal{F}\big] = \lim_{N \rightarrow \infty} \left(\frac{f(N)-k}{f(N)}\right)^{f(N) \left( \frac{(N-1)h(N)}{2N} \right)} = \lim_{N \rightarrow \infty} e^{-k \left( \frac{(N-1)h(N)}{2N} \right)} = 0.$$
\medskip

\noindent Finally if $f(N)=N^2$ then
$$\mathbb{P}_f\big[A_\Gamma \in A_\mathcal{F}\big] = \lim_{N \rightarrow \infty} \left(\frac{N^2-k}{N^2}\right)^{N^2 \left(\frac{N-1}{2N} \right)} = \lim_{N \rightarrow \infty} e^{-k \left( \frac{N-1}{2N} \right)} = e^{-k/2}.$$
\end{proof}

The previous theorem has many consequences, as it can be applied to the families of large-type, extra-large-type, XXL or free-of-infinity Artin groups, for which much is known. Before stating an explicit result in Corollary \ref{Coro11Conj}, we prove the following small lemma:

\begin{lemma}
\label{lem:interescting_big_class}
Let $A_\mathcal{F}$ and $A_\mathcal{H}$ be two families of Artin groups, let $f : \mathbf N \rightarrow \mathbf N$ be a non-decreasing divergent function, and suppose that $\mathbb{P}_f \big[A_\Gamma \in A_\mathcal{H} \big] = 1$. Then
$$\mathbb{P}_f \big[A_\Gamma \in A_\mathcal{F} \big] = \mathbb{P}_f \big[A_\Gamma \in A_\mathcal{F} \cap A_\mathcal{H} \big].$$
\end{lemma}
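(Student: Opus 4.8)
The plan is to prove the two inequalities $\mathbb{P}_f[A_\Gamma \in A_\mathcal{F}] \geq \mathbb{P}_f[A_\Gamma \in A_\mathcal{F} \cap A_\mathcal{H}]$ and $\mathbb{P}_f[A_\Gamma \in A_\mathcal{F}] \leq \mathbb{P}_f[A_\Gamma \in A_\mathcal{F} \cap A_\mathcal{H}]$ separately, working throughout at the finite level $N$ before passing to the limit. The first inequality is immediate: at each level $N$ we have the containment of sets of graphs $(\mathcal F \cap \mathcal H) \cap \mathcal{G}^{N, f(N)} \subseteq \mathcal F \cap \mathcal{G}^{N, f(N)}$, so $\#((\mathcal F \cap \mathcal H) \cap \mathcal{G}^{N, f(N)}) \leq \#(\mathcal F \cap \mathcal{G}^{N, f(N)})$, and dividing by $\#(\mathcal{G}^{N, f(N)})$ and taking $N \to \infty$ gives the claim (and in particular shows the limit defining the left-hand side exists, or at least that its liminf dominates).

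For the reverse inequality I would use the standard union-bound / complementation trick. At level $N$, write
\begin{align*}
\#(\mathcal F \cap \mathcal{G}^{N, f(N)}) &= \#((\mathcal F \cap \mathcal H) \cap \mathcal{G}^{N, f(N)}) + \#((\mathcal F \setminus \mathcal H) \cap \mathcal{G}^{N, f(N)}) \\
&\leq \#((\mathcal F \cap \mathcal H) \cap \mathcal{G}^{N, f(N)}) + \#(\mathcal{G}^{N, f(N)} \setminus \mathcal H).
\end{align*}
Dividing through by $\#(\mathcal{G}^{N, f(N)})$ yields
$$\frac{\#(\mathcal F \cap \mathcal{G}^{N, f(N)})}{\#(\mathcal{G}^{N, f(N)})} \leq \frac{\#((\mathcal F \cap \mathcal H) \cap \mathcal{G}^{N, f(N)})}{\#(\mathcal{G}^{N, f(N)})} + \left(1 - \frac{\#(\mathcal H \cap \mathcal{G}^{N, f(N)})}{\#(\mathcal{G}^{N, f(N)})}\right).$$
Now take $N \to \infty$: the hypothesis $\mathbb{P}_f[A_\Gamma \in A_\mathcal H] = 1$ says exactly that the last parenthesised term tends to $0$, so the limsup of the left-hand side is at most $\mathbb{P}_f[A_\Gamma \in A_\mathcal F \cap A_\mathcal H]$. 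Combining with the first inequality gives equality of the limits.

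The only genuine subtlety — really a bookkeeping point rather than a mathematical obstacle — is that the definition of $\mathbb{P}_f$ requires the relevant limits to exist. I would handle this cleanly by noting that the chain of inequalities above squeezes $\limsup$ and $\liminf$ of $\#(\mathcal F \cap \mathcal{G}^{N,f(N)})/\#(\mathcal{G}^{N,f(N)})$ between the same two quantities built from $\mathcal F \cap \mathcal H$; so if $\mathbb{P}_f[A_\Gamma \in A_\mathcal F \cap A_\mathcal H]$ exists then so does $\mathbb{P}_f[A_\Gamma \in A_\mathcal F]$, and they agree. If one prefers, one can simply assume as part of the statement that both limits exist (as the ``when it exists'' clause in the definition suggests), in which case the argument is just the two one-line inequalities above followed by passing to the limit. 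No properties of Artin groups are used here at all — this is a purely measure-theoretic fact about asymptotic densities — so there is no real hard step; the work is entirely in the preceding theorems that will supply classes $A_\mathcal H$ with $\mathbb{P}_f[A_\Gamma \in A_\mathcal H] = 1$.
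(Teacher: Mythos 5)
Your proposal is correct. The paper's own proof is a one-line inclusion--exclusion carried out directly on the limiting probabilities: $\mathbb P_f[A_\Gamma \in A_\mathcal{F}] = \mathbb P_f[A_\Gamma \in A_\mathcal{F} \cap A_\mathcal{H}] + \mathbb P_f[A_\Gamma \in A_\mathcal{F} \cup A_\mathcal{H}] - \mathbb P_f[A_\Gamma \in A_\mathcal{H}]$, with the last two terms both equal to $1$. Your argument is the same elementary fact, but packaged as a two-sided estimate at finite $N$: the containment $\mathcal F \cap \mathcal H \subseteq \mathcal F$ for one inequality, and the decomposition $\mathcal F = (\mathcal F \cap \mathcal H) \sqcup (\mathcal F \setminus \mathcal H)$ together with $\#((\mathcal F \setminus \mathcal H)\cap\mathcal G^{N,f(N)}) \leq \#(\mathcal G^{N,f(N)} \setminus \mathcal H)$ for the other, which is just the identity $\mathbb P[\mathcal F] - \mathbb P[\mathcal F\cap\mathcal H] = \mathbb P[\mathcal F\cup\mathcal H] - \mathbb P[\mathcal H]$ in disguise. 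What your version buys is a cleaner treatment of the ``when it exists'' caveat in the definition of $\mathbb P_f$: by squeezing $\limsup$ and $\liminf$ you show that existence of $\mathbb P_f[A_\Gamma \in A_\mathcal{F}\cap A_\mathcal{H}]$ forces existence of $\mathbb P_f[A_\Gamma \in A_\mathcal{F}]$ (and vice versa, by the same two bounds), whereas the paper implicitly assumes all the limits involved, including $\mathbb P_f[A_\Gamma \in A_\mathcal{F}\cup A_\mathcal{H}]$, exist. So the proposal is not only correct but marginally more careful than the published argument; no gap to report.
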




\begin{proof}
This is straightforward:
$$\mathbb P_f\big[ A_\Gamma \in A_\mathcal{F} \big]
= \mathbb P_f\big[ A_\Gamma \in A_\mathcal{F} \cap A_\mathcal{H} \big] + \underbrace{\mathbb P_f\big[ A_\Gamma \in A_\mathcal{F} \cup A_\mathcal{H} \big]}_{=1} - \underbrace{\mathbb P_f\big[ A_\Gamma \in A_\mathcal{H} \big]}_{=1}
= \mathbb P_f\big[ A_\Gamma \in A_\mathcal{F} \cap A_\mathcal{H} \big].$$
\end{proof}




\begin{corollary} \label{Coro11Conj}
Let $f : \mathbf{N} \rightarrow \mathbf{N}$ be a function satisfying $f(N) \succ N^2$. Then an Artin group $A_{\Gamma}$ picked at random (relatively to $f$) satisfies any of the following property asymptotically almost surely:
\begin{enumerate}
    \item $A_{\Gamma}$ is torsion-free;
   \item $A_{\Gamma}$ has trivial centre;
   \item $A_{\Gamma}$ has solvable word and conjugacy problems;
    \item $A_{\Gamma}$ satisfies the $K(\pi, 1)$-conjecture;
    \item The set of parabolic subgroups of $A_{\Gamma}$ is closed under arbitrary intersections;
    \item $A_{\Gamma}$ is CAT(0);
   \item $A_{\Gamma}$ is acylindrically hyperbolic;
   \item $A_{\Gamma}$ is hierarchically hyperbolic;
    \item $A_{\Gamma}$ is systolic and biautomatic;
    \item $A_{\Gamma}$ satisfies the Tits Alternative;
    \item $Aut(A_{\Gamma}) \cong A_{\Gamma} \rtimes Out(A_{\Gamma})$, where $Out(A_{\Gamma}) \cong Aut(\Gamma) \times (\mathbf{Z} / 2 \mathbf{Z})$ is finite.
    \end{enumerate}
\end{corollary}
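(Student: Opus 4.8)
The strategy is to bypass essentially all of the probabilistic work by exhibiting a single, very small family of Artin groups that already witnesses every one of the properties (1)--(11). Let $\mathcal{H}$ be the family of defining graphs obtained by forbidding the four coefficients $2$, $3$, $4$ and $\infty$, so that $A_{\mathcal{H}}$ is exactly the class of Artin groups that are simultaneously XXL and free-of-infinity. This family is defined by forbidding $k = 4$ coefficients, so since $f(N) \succ N^2$, part (1) of Theorem \ref{Theorem_N^2} gives $\mathbb{P}_f\big[A_\Gamma \in A_{\mathcal{H}}\big] = 1$; equivalently, the counting ratio $\#(\mathcal{H} \cap \mathcal{G}^{N,f(N)}) / \#(\mathcal{G}^{N,f(N)})$ tends to $1$ as $N \to \infty$.

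Next I would record the elementary containments. A graph all of whose edge-labels are at least $5$ satisfies $\tfrac{1}{m_{ab}} + \tfrac{1}{m_{ac}} + \tfrac{1}{m_{bc}} \leq \tfrac{3}{5} < 1$ on every triangle, so by Definition \ref{defn:2dim}(1) every group in $A_{\mathcal{H}}$ is $2$-dimensional; it is also large-type, extra-large-type and XXL, and, having no $\infty$-label, it is large-type free-of-infinity. Furthermore a graph with all labels at least $5$ cannot be written as $\Gamma_1 *_2 \Gamma_2$, so every group in $A_{\mathcal{H}}$ is irreducible; and being $2$-dimensional of rank at least $3$, it is non-spherical (any three standard generators span a non-spherical standard parabolic, whose Coxeter group is therefore infinite). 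Since $N \to \infty$, for all large $N$ every graph of $\mathcal{H} \cap \mathcal{G}^{N,f(N)}$ gives an irreducible, non-spherical, $2$-dimensional Artin group that is additionally XXL, extra-large-type, large-type and free-of-infinity.

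It then remains to read each conclusion off the list of proved implications collected in Conjecture \ref{Conj1to11}: $2$-dimensionality yields (1), (3), (4) and (10); $2$-dimensionality together with irreducibility and non-sphericity yields (2) and (7); being large-type yields (5) and (9); being XXL yields (6); being extra-large-type yields (8); and being large-type free-of-infinity yields (11). For each such property the corresponding family $A_{\mathcal{F}}$ contains $\mathcal{H} \cap \mathcal{G}^{N,f(N)}$ for all large $N$, so its counting ratio is squeezed between that of $\mathcal{H}$ (which tends to $1$) and $1$; hence the limit defining $\mathbb{P}_f[A_\Gamma \in A_{\mathcal{F}}]$ exists and equals $1$. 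Alternatively one may invoke Lemma \ref{lem:interescting_big_class} (with the role of the big class played by $\mathcal{H}$, and using Lemma \ref{lem:being_reducible_uniformly small} for irreducibility), which gives $\mathbb{P}_f[A_\Gamma \in A_{\mathcal{F}}] = \mathbb{P}_f[A_\Gamma \in A_{\mathcal{F}} \cap A_{\mathcal{H}}] = \mathbb{P}_f[A_\Gamma \in A_{\mathcal{H}}] = 1$.

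The only real care needed here is bookkeeping: one must check that every auxiliary family used is obtained by forbidding finitely many coefficients (so that Theorem \ref{Theorem_N^2} applies) and that the two conditional statements of Conjecture \ref{Conj1to11} --- items (2) and (7), which presuppose irreducibility and non-sphericity --- may legitimately be invoked, which is precisely why the observations that graphs with all labels at least $5$ are automatically irreducible and, in rank at least $3$, non-spherical are needed. There is no analytic difficulty: all the quantitative content already lives in Theorem \ref{Theorem_N^2} and in Lemmas \ref{lem:being_reducible_uniformly small} and \ref{lem:interescting_big_class}.
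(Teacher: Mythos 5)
Your proposal is correct and follows essentially the same route as the paper: both apply Theorem \ref{Theorem_N^2} (with $k=4$ forbidden coefficients) to the XXL free-of-infinity class and then read the eleven properties off the known results collected in Conjecture \ref{Conj1to11}. The only cosmetic difference is that the paper secures irreducibility (and connectedness) by intersecting with the uniformly large classes via Lemma \ref{lem:being_reducible_uniformly small} and Lemma \ref{lem:interescting_big_class}, whereas you observe directly that graphs with all labels at least $5$ are complete, irreducible and, in rank at least $3$, non-spherical --- both are valid.
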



\begin{proof}
Let $A_{\mathcal{K}}$ be the class of XXL free-of-infinity Artin groups, and let $A_{\mathcal{L}} \coloneqq A_{Irr} \cap A_{Con} \cap A_{\mathcal{K}}$. Using Lemma \ref{lem:being_reducible_uniformly small} and Lemma \ref{lem:interescting_big_class} we can see that
$\mathbb P_f\big[A_\Gamma \in A_{\mathcal{L}} \big]=  \mathbb P_f \big[ A_\Gamma \in A_{\mathcal{K}}\big]$. By Theorem \ref{Theorem_N^2}, an Artin group $A_{\Gamma}$ picked at random (relatively to $f$) is asymptotically almost surely in $A_{\mathcal{L}}$. The various results given in Conjecture \ref{Conj1to11} concern families of Artin groups that all contain the family $A_{\mathcal{L}}$. In particular, every Artin group in $A_{\mathcal{L}}$ satisfies the 11 points of the Corollary.
\end{proof}

\section{Two-dimensional Artin groups.}


This section aims at studying from our probabilistic point of view the family of $2$-dimensional Artin groups. This family is particularly important in the study of Artin groups, and many authors in the literature have obtained strong results for this class (see Conjecture \ref{Conj1to11}).

Our goal will be to show that if $f(N) \succ N^{3/2}$ then asymptotically almost surely a random Artin group (relative to $f$) will be $2$-dimensional and if $f(N) \prec  N^{3/2}$ then asymptotically almost surely a random Artin group (relative to $f$) will not be $2$-dimensional. In particular, we will be able to improve the result of Corollary \ref{Coro11Conj}, thus proving Theorem \ref{Thm4}.

The condition of being $2$-dimensional (see Definition \ref{defn:2dim}.(1)) is quite specific, which makes it hard to compute the “size” of the family. As it turns out, the size of this family is comparable to the size of another family of Artin groups, which will turn out to be easier to compute (see Lemma \ref{lem:comparing_dim2_2_2free} and Theorem \ref{theorem:size_of_dimension2}). This other family resembles the family introduced in \cite{Blufstein}. We introduce it thereafter:

\begin{definition}
\label{defn:2free}
We say an Artin group $A_\Gamma$ is $(2,2)$--free if $\Gamma$ does not have any two adjacent edges labelled by $2$. We denote by $A_{\mathcal B}$ the family of $(2,2)$-free Artin groups.
\end{definition}


The following lemma is a key result. It will allow us to restrict to the study of $(2,2)$-free Artin groups, as asymptotically this family has the same size as the family $A_\mathcal{D}$ of $2$-dimensional Artin groups.

\begin{lemma}
\label{lem:comparing_dim2_2_2free}
	For all non-decreasing divergent functions $f:\mathbf N \to \mathbf N$,  we have:
    \begin{itemize}
        \item $\mathbb P_f \big[ A_\Gamma \in A_\mathcal D\big] \leq \mathbb P_f \big[ A_\Gamma \in A_\mathcal B\big]$;
        \item Further, if $f(N) \succ N$, then $\mathbb P_f \big[ A_\Gamma \in \mathcal A_{\mathcal D} \big]=\mathbb P_f \big[ A_\Gamma \in A_\mathcal B\big]$.
    \end{itemize}
\end{lemma}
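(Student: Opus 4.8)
The first inequality should be easy: every $2$-dimensional Artin group is $(2,2)$-free. Indeed, if $\Gamma$ had two adjacent edges $e_{ab}$ and $e_{bc}$ both labelled $2$, then regardless of $m_{ac} \in \{2,3,\dots,\infty\}$ we would have $\frac{1}{m_{ab}}+\frac{1}{m_{bc}}+\frac{1}{m_{ac}} \geq \frac12+\frac12+0 = 1$, with equality only if $m_{ac}=\infty$; but in the case $m_{ac}<\infty$ we get a strict inequality $>1$, so the triangle on $\{a,b,c\}$ is spherical and $A_\Gamma$ is not $2$-dimensional. (Even the equality case gives a spherical $\tilde A$-type... wait, no — $m_{ac}=\infty$ with two $2$'s is $B_2\times$nothing; one must be slightly careful, but the point is that the generic completion of such a configuration is spherical.) More cleanly: $\mathcal D \subseteq \mathcal B$ as sets of graphs, hence $\mathcal D \cap \mathcal G^{N,f(N)} \subseteq \mathcal B \cap \mathcal G^{N,f(N)}$ for every $N$, and taking the ratio with $\#\mathcal G^{N,f(N)}$ and then the limit gives $\mathbb P_f[A_\Gamma\in A_\mathcal D]\leq \mathbb P_f[A_\Gamma\in A_\mathcal B]$.

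For the second bullet, assuming $f(N)\succ N$, I want to show the reverse inequality $\mathbb P_f[A_\Gamma\in A_\mathcal B]\leq \mathbb P_f[A_\Gamma\in A_\mathcal D]$, which combined with the first gives equality. Equivalently, I will bound the probability of the ``bad'' event $\Gamma\in\mathcal B\setminus\mathcal D$: the graph is $(2,2)$-free but still contains a spherical triangle. A triangle on three vertices $\{a,b,c\}$ that is spherical and has no two of its edges labelled $2$ must have its coefficient triple $(m_{ab},m_{ac},m_{bc})$ satisfying $\frac1{m_{ab}}+\frac1{m_{ac}}+\frac1{m_{bc}}>1$ while at most one of the three labels equals $2$. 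The plan is to enumerate, or at least crudely bound the number of, such ``bad triples'': the finite spherical triangle groups are of type $A_3$ (labels $(2,3,3)$ up to the edge convention), $B_3$ ($(2,3,4)$), $H_3$ ($(2,3,5)$), and the reducible-but-still-spherical dihedral$\times\mathbb Z$ cases $(2,2,m)$ — but the latter are excluded by $(2,2)$-freeness. So the bad triangles, among $(2,2)$-free ones, use labels drawn from the \emph{fixed finite set} $\{2,3,4,5\}$ in one of finitely many patterns. The key consequence: there is a constant $C$ (independent of $N$) such that for any fixed triple of vertices, $\mathbb P[\text{that triangle is a bad triangle}] \leq C/f(N)^2$, because at least two of the three edge-labels are forced to lie in a bounded set (each such constraint costing a factor $\leq c/f(N)$), while the third is unconstrained.

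Given this, a union bound over all $\binom N3 \leq N^3/6$ triples yields
$$\mathbb P\big[\Gamma\in\mathcal B\setminus\mathcal D \mid \Gamma\in\mathcal G^{N,f(N)}\big] \;\leq\; \binom N3 \cdot \frac{C}{f(N)^2} \;\leq\; \frac{C}{6}\cdot\frac{N^3}{f(N)^2}.$$
Hmm — this only tends to $0$ when $f(N)\succ N^{3/2}$, not merely $f(N)\succ N$, so the naive union bound is \emph{not} enough to get the stated hypothesis $f(N)\succ N$. The main obstacle, then, is to get a sharper estimate. The fix I would pursue: condition on $\Gamma$ being $(2,2)$-free. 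A bad triangle is either one with \emph{no} label equal to $2$ (labels in $\{3,4,5\}$, pattern among $\{(3,3,3)\text{-ish},(3,3,4),(3,3,5)\}$ — wait, $(3,3,3)$ gives sum $=1$, non-spherical; so really $(2,3,3),(2,3,4),(2,3,5)$ and that's it, all of which \emph{do} contain a $2$). So in fact \emph{every} spherical triangle contains at least one edge labelled $2$. Under the $(2,2)$-free conditioning, once one edge at a vertex $b$ is labelled $2$, no other edge at $b$ can be $2$; the effect is that the conditioning suppresses the occurrence of $2$-labelled edges, and I expect that conditionally the probability of a given edge being labelled $2$ is $O(1/f(N))$ \emph{uniformly}, while the other two edges of a bad triangle are \emph{both} forced into the bounded set $\{3,4,5\}$... no: $(2,3,3)$ has two non-$2$ edges in $\{3\}$, costing $1/f(N)$ each, plus the $2$-edge costing $\approx 1/f(N)$, so $\approx 1/f(N)^3$ per triple, giving $N^3/f(N)^3\to 0$ as soon as $f(N)\succ N$. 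That is the argument I would write: carefully verify that every spherical triple (necessarily $(2,3,3),(2,3,4),(2,3,5)$ up to permutation) has \emph{at least two} edges whose labels lie in the fixed finite set $\{2,3,4,5\}$ \emph{and} involves an edge labelled $2$, so that the probability that a fixed triple is bad is $O(1/f(N)^3)$; then the union bound over $\binom N3$ triples gives $O(N^3/f(N)^3)\to 0$ under $f(N)\succ N$. Comparing numerators and denominators of the defining ratios as in the first bullet then yields $\mathbb P_f[A_\Gamma\in A_\mathcal B] - \mathbb P_f[A_\Gamma\in A_\mathcal D]=\mathbb P_f[\Gamma\in\mathcal B\setminus\mathcal D]=0$, completing the proof. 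The delicate point to get right is the complete (and correct, modulo the paper's edge-labelling convention) list of finite spherical triangle Artin groups and confirming the claimed ``two labels in a bounded set plus a $2$'' property for each.
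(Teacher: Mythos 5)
Your argument for the first bullet contains a genuine error: the containment $\mathcal D \subseteq \mathcal B$ is false. The case you hesitated over and then dismissed is exactly the problem. If $m_{ab}=m_{bc}=2$ and $m_{ac}=\infty$, then $\frac1{m_{ab}}+\frac1{m_{ac}}+\frac1{m_{bc}}=\frac12+0+\frac12=1\leq 1$, so the triangle spanned by $a,b,c$ is \emph{not} spherical (the associated Coxeter group is $D_\infty\times \mathbf Z/2\mathbf Z$, which is infinite). Thus a graph can perfectly well be $2$-dimensional while containing two adjacent edges labelled $2$, as long as the third side of that triangle is missing; the path $a - b - c$ with both edges labelled $2$ (so $A_\Gamma\cong F_2\times\mathbf Z$) lies in $\mathcal D\setminus\mathcal B$. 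Consequently the pointwise inclusion $\mathcal D\cap\mathcal G^{N,f(N)}\subseteq\mathcal B\cap\mathcal G^{N,f(N)}$ you invoke does not hold for any $N\geq 3$, and the first bullet cannot be proved by a set-theoretic containment at all: for bounded label sets (e.g.\ coefficients in $\{2,\infty\}$, where $2$-dimensional means triangle-free while $(2,2)$-free means the graph is a matching) the inequality actually fails, which is why the hypothesis that $f$ is divergent is essential. The correct route, which is what the paper does, is probabilistic: condition on whether $\Gamma\in\mathcal B$. Given a pair of adjacent $2$-labelled edges, $2$-dimensionality forces the third edge of that triangle to be labelled $\infty$, an event of probability at most $1/f(N)$; hence $\mathbb P\big[\Gamma\in\mathcal D\mid\Gamma\notin\mathcal B\big]\leq 1/f(N)\to 0$, and $\mathbb P\big[\Gamma\in\mathcal D\big]\leq\mathbb P\big[\Gamma\in\mathcal B\big]+\frac{1}{f(N)}$ gives the first bullet in the limit.

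Your treatment of the second bullet, after the self-correction, is essentially the paper's argument and is sound in outline: in a $(2,2)$-free graph every spherical triangle must carry exactly one label $2$ (three labels $\geq 3$ give reciprocal sum $\leq 1$), and the remaining two labels must form $(3,3)$, $(3,4)$ or $(3,5)$, so the label triple lies in a fixed list of $15$ ordered possibilities; the conditional per-triple probability is then $O\big(f(N)^{-3}\big)$ and the union bound over $\binom N3$ triples gives $O\big(N^3/f(N)^3\big)\to 0$ when $f(N)\succ N$, yielding $\mathbb P_f\big[A_\Gamma\in A_\mathcal B\big]\leq\mathbb P_f\big[A_\Gamma\in A_\mathcal D\big]$. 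The point you flag as delicate — that conditioning on $(2,2)$-freeness does not inflate the probability of a prescribed bad labelling of a fixed triangle — does need an argument (the paper bounds the conditional probability by $15/(f(N)^3-3N)$); a short counting comparison does it, e.g.\ the number of $(2,2)$-free graphs with a prescribed bad labelling on $\Delta$ is at most the number of $(2,2)$-free labellings of the remaining edges, while the total number of $(2,2)$-free graphs is at least $(f(N)-1)^3$ times that quantity. But as written, the lemma is not proved: the first bullet rests on a false claim, and since your proof of equality in the second bullet quotes the first bullet for one of the two inequalities, that part is incomplete as well.
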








\begin{proof}

The probability that a defining graph $\Gamma$ picked at random gives rise to a $2$-dimensional Artin group can be found by conditioning on the event “$\Gamma \in \mathcal B$”:

\begin{align*}
   (*) \ \ \ \ \mathbb P \big[ \Gamma \in \mathcal D \mid \Gamma \in \mathcal G^{N, f(N)}\big] \ = \ & \mathbb P \big[ \Gamma \in \mathcal D \ | \ (\Gamma \in \mathcal B) \cap (\Gamma \in \mathcal G^{N, f(N)})\big]\mathbb P \big[\Gamma \in \mathcal B\mid \Gamma \in \mathcal G^{N, f(N)}\big] \\
   + \ &\mathbb P \big[ \Gamma \in \mathcal D \ | \ (\Gamma \not\in \mathcal B) \cap (\Gamma \in \mathcal G^{N, f(N)})\big]\mathbb P \big[ \Gamma \not\in \mathcal B \mid \Gamma \in \mathcal G^{N, f(N)}\big] 
\end{align*}

Note that once we have two adjacent edges $e_1, e_2$ labelled by 2, then the probability that the triangle spanned by $\{e_1, e_2\}$ generates an Artin groups of spherical type is exactly the probability that the last edge is not labelled by $\infty$. This probability is  $\frac{f(N)-1}{f(N)}$, hence we have
$$\mathbb P \big[ \Gamma \in \mathcal D \ | \ (\Gamma \not\in \mathcal B)\cap (\Gamma \in \mathcal G^{N, f(N)})\big] \leq 1 - \frac{f(N)-1}{f(N)} = \frac{1}{f(N)}.$$

Whence we get the following upper bound for $(*)$:
$$\mathbb P\big[ \Gamma  \in \mathcal D \mid \Gamma \in \mathcal G^{N, f(N)}\big]  \leq \mathbb P \big[ \Gamma \in \mathcal B \mid \Gamma \in \mathcal G^{N, f(N)}\big]+\mathbb P \big[ \Gamma \not\in \mathcal B \mid \Gamma \in \mathcal G^{N, f(N)}\big] \cdot \frac{1}{f(N)}.$$

We now deal with the lower bound. The probability that a given triangle $\Delta$ is not of spherical type is the quotient
$$\frac{\# \text{ ways that } \Delta \text{ can be spherical}}
{\# \text{ possible coefficients on } \Delta}. \ \ \ \ (**)$$
In our case, it is given that $A_{\Gamma}$ is $(2, 2)$-free, so the only triangles which are not of spherical type are of the form $(2,3,3)$; $(2,3,4)$ or $(2,3,5)$. When considering the possible permutations of the order of the coefficients, this gives $15$ possibilities. This gives the numerator of $(**)$.

A clear upper bound for the denominator of $(**)$ is $f(N)^3$. However, it may not be equal to $f(N)^3$, as the condition of being $(2, 2)$-free coming from other edges in the graph could force some edges of $\Delta$ to not be labelled by $2$. That said, the only triplet of coefficients for $\Delta$ that could be forbidden by adjacent edges would be those containing at least one edge labelled with $2$. This number is bounded by $3N$, thus the denominator of $(**)$ admits $f(N)^3 - 3N$ as lower bound.

Putting everything together, we obtain
$$\frac{15}{f(N)^3} \leq
\frac{\# \text{ ways that } \Delta \text{ can be spherical}}
{\# \text{ possible coefficients on } \Delta}
\leq \frac{15}{f(N)^3 - 3N}. \ \ \ \ (***)$$

Hence, by an union bound we get:
\begin{align*}
    \mathbb P\big[ \Gamma  \not\in \mathcal D \ | \ (\Gamma \in \mathcal B)\cap (\Gamma \in \mathcal G^{N, f(N)}) \big]
    &\leq \sum_{\Delta \; \text{triangle in} \; \Gamma}\mathbb P \big[ \Delta \;\text{is of spherical type} \;\ | \ (\Gamma \in \mathcal B) \cap (\Gamma \in \mathcal G^{N, f(N)})  \big] \\
    &\leq {N \choose 3} \frac{15}{f(N)^3 -3N}.
\end{align*}

Therefore: $$\mathbb P\big[ \Gamma  \in \mathcal D \mid  \Gamma \in \mathcal G^{N, f(N)}\big] \geq \left(1-{N \choose 3}\frac{15}{f(N)^3 -3N}\right)\mathbb P \big[ \Gamma \in \mathcal B \mid \Gamma \in \mathcal G^{N, f(N)}\big]. \ \ \ \ (****)$$



Now for any non-decreasing divergent function $f$ we have that $\frac{1}{f(N)} \to 0$ hence
$$\mathbb P_f \big[ A_\Gamma \in A_\mathcal D\big]=\lim_{N \to \infty} \mathbb P\big[ \Gamma  \in \mathcal D \mid \Gamma \in \mathcal G^{N, f(N)}\big] \overset{(*)} \leq \lim_{N \to \infty} \mathbb P \big[ \Gamma \in \mathcal B \mid \Gamma \in \mathcal G^{N, f(N)}\big]=\mathbb P_f\big[ A_{\Gamma} \in A_{\mathcal B}\big].$$

If $f(N) \succ N$ it is not hard to see that
$$\lim_{N \to \infty} \left({N \choose 3}\frac{15}{f(N)^3 -3N}\right) = 0.$$

This means that
$$\mathbb P_f \big[ A_\Gamma \in A_\mathcal D\big]=\lim_{N \to \infty} \mathbb P\big[ \Gamma  \in \mathcal D \mid \Gamma \in \mathcal G^{N, f(N)}\big] \overset{(****)} \geq \lim_{N \to \infty} \mathbb P \big[ \Gamma \in \mathcal B \mid \Gamma \in \mathcal G^{N, f(N)}\big]=\mathbb P_f\big[ A_{\Gamma} \in A_{\mathcal B}\big].$$
\end{proof}


We now move towards determining for which (non-decreasing divergent) functions an Artin group picked at random is asymptotically almost surely $2$-dimensional, or not $2$-dimensional. In view of Lemma \ref{lem:comparing_dim2_2_2free}, looking at $(2, 2)$-free Artin groups will be enough to give a conclusion for $2$-dimensional Artin groups. The result we want to prove is the following:


\begin{theorem} \label{theorem:size_of_dimension2}
Let $f : \mathbf{N} \rightarrow \mathbf{N}$, and let $A_\Gamma$ be an Artin group picked at random (relatively to $f$). Then:
    \begin{enumerate}
    \item \label{item:bigger} If $f(N) \succ N^{3/2}$, then asymptotically almost surely $A_{\Gamma}$ is $2$-dimensional.
    \item \label{item:smaller} If $f(N) \prec N^{3/2}$, then asymptotically almost surely $A_{\Gamma}$ is not $2$-dimensional.
    \item \label{item:equal} If $f(N)=N^{3/2}$ then then $\mathbb P_f \big[ A_\Gamma \in \mathcal D\big] \leq 2/3.$
    \end{enumerate}
\end{theorem}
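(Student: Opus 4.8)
The plan is to route everything through Lemma~\ref{lem:comparing_dim2_2_2free}, reducing the study of the $2$-dimensional family $A_\mathcal{D}$ to that of the $(2,2)$-free family $A_\mathcal{B}$. Its first bullet, $\mathbb{P}_f[A_\Gamma \in A_\mathcal D] \le \mathbb{P}_f[A_\Gamma \in A_\mathcal B]$, holds for every $f$ and already supplies the upper bounds needed for items~\ref{item:smaller} and~\ref{item:equal}; and since $N^{3/2} \succ N$, its second bullet upgrades this to an equality in the regime of item~\ref{item:bigger}. So in every case it suffices to understand $\lim_{N\to\infty}\mathbb{P}\big[\Gamma \in \mathcal B \mid \Gamma \in \mathcal G^{N,f(N)}\big]$.

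The key device is a cherry count. Let $X = X_N$ be the number of paths of length two in $\Gamma$ — i.e.\ unordered pairs of adjacent edges — both of whose edges carry the label $2$. A defining graph is $(2,2)$-free exactly when its $2$-labelled edges form a matching, so $\{\Gamma \in \mathcal B\} = \{X = 0\}$, and a direct count gives
$$\mathbb{E}[X] \;=\; 3\binom{N}{3}\frac{1}{f(N)^2} \;=\; \frac{N(N-1)(N-2)}{2\,f(N)^2}.$$
Hence $\mathbb{E}[X] \to 0$, $\mathbb{E}[X]\to\infty$, or $\mathbb{E}[X]\to\tfrac12$ according to whether $f(N)\succ N^{3/2}$, $f(N)\prec N^{3/2}$, or $f(N)=N^{3/2}$, which is precisely the trichotomy of the statement.

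For item~\ref{item:bigger}, Markov's inequality gives $\mathbb{P}[X\ge 1]\le\mathbb{E}[X]\to 0$, so $\mathbb{P}[X=0]\to 1$. For items~\ref{item:smaller} and~\ref{item:equal} I would use the Cauchy--Schwarz bound $\mathbb{E}[X]^2 \le \mathbb{E}[X^2]\,\mathbb{P}[X\ge 1]$, i.e.\ $\mathbb{P}[X=0]\le 1-\mathbb{E}[X]^2/\mathbb{E}[X^2]$, together with $\mathbb{E}[X^2]=\mathbb{E}[X]+\mathbb{E}[X(X-1)]$. Expanding $\mathbb{E}[X(X-1)]$ as a sum over ordered pairs of distinct cherries and splitting according to whether they share $0$ or $1$ edge, the edge-disjoint pairs contribute $(1+o(1))\,\mathbb{E}[X]^2$ and the single-shared-edge pairs contribute $\Theta\big(N^4 f(N)^{-3}\big)$. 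When $f(N)\prec N^{3/2}$ (so in particular $f(N)\prec N^2$) the latter is $o(\mathbb{E}[X]^2)$, whence $\mathbb{E}[X^2]=(1+o(1))\mathbb{E}[X]^2$ and $\mathbb{P}[X=0]\to 0$, giving~\ref{item:smaller}. When $f(N)=N^{3/2}$ the one-shared-edge term tends to $0$, so $\mathbb{E}[X(X-1)]\to\tfrac14$, $\mathbb{E}[X^2]\to\tfrac34$ and $\mathbb{E}[X]^2\to\tfrac14$, whence $\limsup_N\mathbb{P}[X=0]\le 1-\tfrac{1/4}{3/4}=\tfrac23$, giving~\ref{item:equal}. (A standard Poisson/Chen--Stein approximation would in fact pin the limit at $e^{-1/2}<2/3$, but the cruder bound is all that is needed.)

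The one delicate point is the second-moment bookkeeping: one must enumerate the ordered pairs of distinct cherries by their edge-overlap pattern carefully enough to verify both that the edge-disjoint pairs reproduce $\mathbb{E}[X]^2$ up to a $(1+o(1))$ factor and that the overlapping pairs are genuinely negligible throughout the relevant range of $f$. Everything else is routine.
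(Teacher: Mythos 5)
Your proposal is correct and follows essentially the same route as the paper: reduce to the $(2,2)$-free family via Lemma~\ref{lem:comparing_dim2_2_2free}, count label-$2$ cherries, apply Markov's inequality when $f(N) \succ N^{3/2}$, and apply the second-moment (Cauchy--Schwarz) bound when $f(N) \prec N^{3/2}$ or $f(N) = N^{3/2}$. The only difference is bookkeeping: you split $\mathbb{E}\big[X(X-1)\big]$ by the number of shared edges (two cases) where the paper enumerates nine vertex-overlap configurations of the two triangles, and both computations yield the same asymptotics, including the bound $2/3$ at $f(N)=N^{3/2}$.
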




\begin{proof}
Let $f$ be any non-decreasing, divergent function. We need to compute $\mathbb P_f \big[ A_{\Gamma} \in A_{\mathcal D} \big]$. In view of Lemma \ref{lem:comparing_dim2_2_2free}, it is enough to compute $\mathbb P_f \big[ A_{\Gamma} \in A_{\mathcal B} \big]$, i.e. the probability that an Artin group $A_{\Gamma}$ picked at random is $(2, 2)$-free. To do this, we will use the second moment method.





Let us consider a graph $\Gamma \in \mathcal{G}^{N, f(N)}$. For any ordered triplet $(v_1, v_2, v_3)$ of distinct vertices of $\Gamma$, we let $ I_{(v_1, v_2, v_3)}:\mathcal G^{N, f(N)} \to \{0,1\}$ be the random variable which takes $1$ on $\Gamma \in \mathcal G^{N, f(N)}$ precisely when  $(v_1, v_2, v_3)$ spans a triangle with $m_{v_1, v_2}=m_{v_1, v_3}=2$. We let
$$X= \left( \sum_{(v_1, v_2, v_3) \in V(\Gamma)^3} I_{(v_1, v_2, v_3)} \right) :\mathcal G^{N, f(N)} \to \mathbf N$$
where the sum is taken over all triplets of distinct vertices. The variable $X$ counts the number of pairs of adjacent edges labelled by a $2$, twice (because of the permutation of these edges).

We can compute the expectation $\mathbb E \big[ I_{(v_1, v_2, v_3)}\big]=f(N)^{-2}$ and hence 

 \begin{align*}
 \begin{split}
 \mathbb E \big[ X\big]&=\sum_{(v_1, v_2, v_3)}\mathbb E \big[ I_{(v_1, v_2, v_3)}\big] \\
 &=N(N-1)(N-2)f(N)^{-2} \\
 &\sim N^3f(N)^{-2}. 
 \end{split}
 \end{align*}
 
 Now, we use the second moment method, as in (\cite{CharneyFarber}, Theorem 6):
 
 $$\mathbb P \big[ X \neq 0\big] \geq \frac{\mathbb E\big[ X \big]^2 }{\mathbb E \big[ X^2\big]}.$$
 
 We have already computed $\mathbb E \big[ X\big]$, so we now compute $ \mathbb E \big[ X^2\big]$ by dividing into several cases the sum
 $$X^2= \sum I_{(v_1, v_2, v_3)}I_{(w_1, w_2, w_3)}.$$
 Note that the sum is taken over all ordered triplets $(v_1, v_2, v_3)$ and $(w_1, w_2, w_3)$ of vertices, where the $v_i$'s are distinct, and the $w_i$'s are distinct. In a triangle $(v_1, v_2, v_3)$ such that $m_{v_1, v_2}=m_{v_1, v_3}=2$, we shall call $v_1$ the $\textbf{central}$ vertex of the triangle.The different cases are treated below. They can be seen in Figure \ref{fig:8cases}.
 \medskip

\textbf{Case 1:} Let $X_1$ denote the sum of products $I_{(v_1, v_2, v_3)}I_{(w_1, w_2, w_3)}$ such that no vertex appears in both triples. Then $$ \mathbb E\big[ X_1\big] =\frac{N!}{(N-6)!}f(N)^{-4} \sim N^{6}f(N)^{-4}.$$
  
\textbf{Case 2:} Let $X_2$ denote the sum of products $I_{(v_1, v_2, v_3)}I_{(w_1, w_2, w_3)}$ such that these two triangles share exactly one vertex and the vertex they share is central in both triangles (i.e. $v_1=w_1$). Then we have
$$ \mathbb E\big[ X_2\big] =\frac{N!}{(N-5)!}f(N)^{-4} \sim N^{5}f(N)^{-4}.$$

\textbf{Case 3:} Let $X_3$ denote the sum of products $I_{(v_1, v_2, v_3)}I_{(w_1, w_2, w_3)}$ such that these two triangles share exactly one vertex, where this vertex is the central vertex for one triangle and not a central vertex for the other triangle (for example $v_2=w_1$). In this case, we get:

$$ \mathbb E\big[ X_3\big] =4\frac{N!}{(N-5)!}f(N)^{-4} \sim 4N^{5}f(N)^{-4}.$$

\textbf{Case 4:} Let $X_4$ denote the sum of products $I_{(v_1, v_2, v_3)}I_{(w_1, w_2, w_3)}$ such that these two triangles share exactly one vertex, where this vertex is not central for either triangle (for example $v_2=w_2$). Then
$$ \mathbb E\big[ X_4\big] =4\frac{N!}{(N-5)!}f(N)^{-4} \sim 4N^{5}f(N)^{-4}.$$
     
 \textbf{Case 5:} Let $X_5$ denote the sum of products $I_{(v_1, v_2, v_3)}I_{(w_1, w_2, w_3)}$ such that these two triangles share exactly two vertices and these two vertices are not central for either triangle (for example $v_2=w_2$ and $v_3=w_3$). In this case 
 $$ \mathbb E\big[ X_5\big] =2\frac{N!}{(N-4)!}f(N)^{-4} \sim 2N^{4}f(N)^{-4}.$$

 \textbf{Case 6:} Let $X_6$ denote the sum of products $I_{(v_1, v_2, v_3)}I_{(w_1, w_2, w_3)}$ such that these two triangles share exactly two vertices and one of these is central in both triangles and the other is not (for example $v_1=w_1$ and $v_3=w_2$). In this case 
 $$ \mathbb E\big[ X_6\big] =4\frac{N!}{(N-4)!}f(N)^{-3} \sim 4N^{4}f(N)^{-3}.$$
 
  \textbf{Case 7:} Let $X_7$ denote the sum of products $I_{(v_1, v_2, v_3)}I_{(w_1, w_2, w_3)}$ such that these two triangles share exactly two vertices where one of these is central for the triangle $(v_1, v_2, v_3)$ but not for $(w_1, w_2, w_3)$, and the other vertex is central for the triangle $(w_1, w_2, w_3)$ but not for $(v_1, v_2, v_3)$ (for example $v_1=w_3$ and $w_1=v_3$). In this case we have:
  
   $$ \mathbb E\big[ X_7\big] =4\frac{N!}{(N-4)!}f(N)^{-3} \sim 4N^{4}f(N)^{-3}.$$
   
\textbf{Case 8:} Let $X_8$ denote the sum of products $I_{(v_1, v_2, v_3)}I_{(w_1, w_2, w_3)}$ such that these two triangles share all three vertices, and such the central vertices of both triangles are the same (i.e. $v_1=w_1$). In this case, we have:

  
   $$ \mathbb E\big[ X_8 \big] =2\frac{N!}{(N-3)!}f(N)^{-2} \sim 2N^{3}f(N)^{-2}.$$
   
\textbf{Case 9:} Let $X_9$ denote the sum of products $I_{(v_1, v_2, v_3)}I_{(w_1, w_2, w_3)}$ such that these two triangles share all three vertices, and such that the central vertex of the first triangle is not the central vertex of the second triangle (for example $v_1 = w_2$). Then the three edges of the triangle must be labelled by a $2$, and we get

$$ \mathbb E\big[ X_9 \big] = 4 \frac{N!}{(N-3)!}f(N)^{-3} \sim 2N^{3}f(N)^{-3}.$$
   
\begin{figure}
\label{fig:8cases}
    \centering
    \includegraphics[scale=0.5]{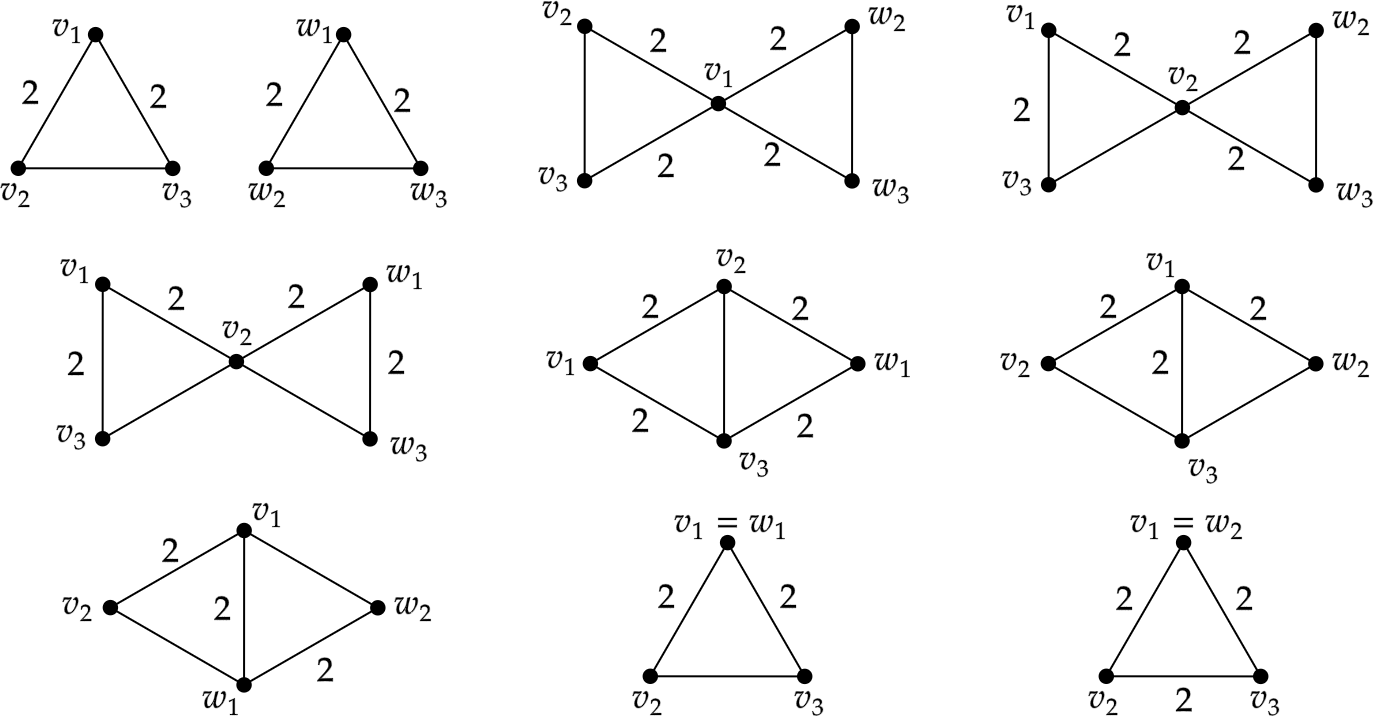}
    \caption{From top-left to bottom-right: the 9 cases described in the proof of Theorem \ref{theorem:size_of_dimension2}. The edges that are not explicitly labelled by $2$ can be labelled by any coefficient, including $\infty$.}
\end{figure}
   
   Therefore, we have

   \begin{align*}
   	\begin{split}
   		\frac{\mathbb E\big[ X^2 \big] }{\mathbb E \big[ X\big]^2} &= \sum_{i=1}^{8} \frac{\mathbb E\big[ X_i \big] }{\mathbb E \big[ X\big]^2} \\
   		&\sim\frac{N^6f(N)^{-4}+9N^5f(N)^{-4}+2N^4f(N)^{-4}+8N^4f(N)^{-3}+2N^{3}f(N)^{-3}+2N^3f(N)^{-2}}{N^6f(N)^{-4}} \\
   		&\sim 1+\frac{9}{N}+\frac{2}{N^2}+\frac{8f(N)}{N^2}+\frac{4f(N)}{N^3}+\frac{2f(N)^2}{N^3}.
   	\end{split}
   \end{align*}
   
  Hence, if $f(N) \prec N^{3/2}$ then by definition there exists a non-decreasing divergent function $h$ such that $f(N)h(N)=N^{3/2}$. In this case we get:
  
  $$ \mathbb P \big[ X \neq 0\big] \geq \left(\frac{\mathbb E\big[ X^2 \big] }{\mathbb E \big[ X\big]^2}\right)^{-1} \sim \left(1+\frac{9}{N}+\frac{2}{N^2}+\frac{8}{h(N)N^{1/2}} + \frac{4}{h(N) N^{3/2}} + \frac{2}{h(N)^2} \right)^{-1}.$$
   
When $f(N) \prec N^{3/2}$, we obtain
$$\mathbb P_f\big[ A_\Gamma \in A_{\mathcal B}\big]=\lim_{N \to \infty}\mathbb P\big[\Gamma \in \mathcal B \ | \ \Gamma \in \mathcal G^{N,f(N)}\big]
=\lim_{N \to \infty} \mathbb P \big[ X = 0\big]
=1-\lim_{N \to \infty} \mathbb P \big[ X \neq 0\big]
= 0.$$
   

Thus asymptotically almost surely $A_\Gamma$ is not $(2,2)$--free. In view of Lemma \ref{lem:comparing_dim2_2_2free}, this also means that asymptotically almost surely  $A_\Gamma$ is not of dimension $2$, this proves item \ref{item:smaller} in Theorem \ref{theorem:size_of_dimension2}. \\

We note that the above calculation allows us to find a lower bound for $\mathbb P_f \big[ A_\Gamma \in A_\mathcal B\big]$ at $f(N)=N^{3/2}$. Indeed, this is equivalent to saying that $h(N)=1$ and hence we get $\mathbb P \big[ X \neq 0\big] \gtrsim \frac{1}{3}$, and so at $f(N)=N^{3/2}$  we have $\mathbb P_f \big[ A_\Gamma \in A_\mathcal B\big] \leq \frac{2}{3}$. Hence by Lemma \ref{lem:comparing_dim2_2_2free}, this proves item \ref{item:equal} in the Theorem. \\

We note that  $\mathbb P\big[\Gamma \in \mathcal B \ | \ \Gamma \in \mathcal G^{N, f(N)}\big] =1-\mathbb P \big[ X \geq 1\big]$ and by the Markov inequality: $$ \mathbb P \big[ X \geq 1\big] \leq \mathbb E\big[X\big] \leq N^3f(N)^{-2}.$$ 

Hence if $f(N) \succ N^{3/2}$ then we can write $f(N)=N^{3/2}g(N)$ for some non-decreasing divergent function $g:\mathbf N \to \mathbf N$ and in this case $$\mathbb P \big[ X \geq 1\big] \leq \frac{1}{g(N)^2}. $$

Therefore, for $f(N) \succ N^{3/2}$ we have
$$\mathbb P_f\big[A_\Gamma \in A_\mathcal B\big] = \lim_{N \to \infty} \mathbb P\big[\Gamma \in \mathcal B \ | \ \mathcal G^{N, f(N)}\big]
=1-\lim_{N \to \infty} \mathbb P \big[ X \geq 1\big]
\geq 1-\lim_{N \to \infty} \frac{1}{g(N)^2}
=1.$$
In particular, asymptotically almost surely $A_{\Gamma}$ is $(2,2)$--free. By applying Lemma \ref{lem:comparing_dim2_2_2free} (as $f(N) \succ N$), we get that asymptotically almost surely $A_{\Gamma}$ is $2$-dimensional. This proves item \ref{item:bigger} and hence Theorem \ref{theorem:size_of_dimension2}.
 \end{proof}

We are now able to prove a refinement of Corollary \ref{Coro11Conj}:

\begin{corollary} \label{Coro7Conj}
Let $f : \mathbf{N} \rightarrow \mathbf{N}$ be a function satisfying $f(N) \succ N^{3/2}$. Then an Artin group $A_{\Gamma}$ picked at random (relatively to $f$) satisfies any of the following property asymptotically almost surely:
\begin{enumerate}
	\item \label{torsionfree} $A_{\Gamma}$ is torsion-free;
    \item\label{trivialcentre} $A_{\Gamma}$ has trivial centre;
    \item\label{conjpb} $A_{\Gamma}$ has solvable word and conjugacy problems;
    \item\label{Kpi1} $A_{\Gamma}$ satisfies the $K(\pi, 1)$-conjecture;
    \item\label{parabolic} The set of parabolic subgroups of $A_{\Gamma}$ is closed under arbitrary intersections;
    \item\label{AH} $A_{\Gamma}$ is acylindrically hyperbolic;
    \item\label{Titsalt} $A_{\Gamma}$ satisfies the Tits Alternative.
\end{enumerate}
\end{corollary}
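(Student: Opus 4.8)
The plan is to run exactly the argument of Corollary \ref{Coro11Conj}, but with Theorem \ref{theorem:size_of_dimension2} playing the role of Theorem \ref{Theorem_N^2}, together with the observation that a $2$-dimensional Artin group of rank at least $3$ is automatically non-spherical. Fix $f$ with $f(N) \succ N^{3/2}$. First I would collect the relevant asymptotically almost sure events. Theorem \ref{theorem:size_of_dimension2}.(\ref{item:bigger}) gives $\mathbb P_f[A_\Gamma \in A_\mathcal D] = 1$, and then the first part of Lemma \ref{lem:comparing_dim2_2_2free} forces $\mathbb P_f[A_\Gamma \in A_\mathcal B] = 1$ as well (since $f(N) \succ N^{3/2}$, this can alternatively be read directly off the proof of Theorem \ref{theorem:size_of_dimension2}, where $(2,2)$-freeness is what is actually established). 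Lemma \ref{lem:being_reducible_uniformly small} gives $\mathbb P_f[A_\Gamma \in A_{Irr}] = \mathbb P_f[A_\Gamma \in A_{Con}] = 1$. Finally, for every $N \geq 3$ the rank of $A_\Gamma$ is $N \geq 3$, and a $2$-dimensional Artin group of rank $\geq 3$ is non-spherical — the standard parabolic spanned by any three generators is non-spherical by Definition \ref{defn:2dim}.(1), hence so is the ambient group — so, writing $A_{\mathrm{nsph}}$ for the class of non-spherical Artin groups, $\mathbb P_f[A_\Gamma \in A_{\mathrm{nsph}}] = 1$ too.

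Next I would intersect: applying Lemma \ref{lem:interescting_big_class} finitely many times, the class $A_{\mathcal L} := A_{Irr} \cap A_{Con} \cap A_{\mathrm{nsph}} \cap A_\mathcal D \cap A_\mathcal B$ satisfies $\mathbb P_f[A_\Gamma \in A_{\mathcal L}] = 1$, i.e. $A_\Gamma$ is asymptotically almost surely in $A_{\mathcal L}$. It then remains to observe that every group in $A_{\mathcal L}$ enjoys the seven listed properties, which is a direct reading of Conjecture \ref{Conj1to11}: items (\ref{torsionfree}), (\ref{conjpb}), (\ref{Kpi1}) and (\ref{Titsalt}) follow from being $2$-dimensional (points (1), (3), (4) and (10) there); items (\ref{trivialcentre}) and (\ref{AH}) follow from being $2$-dimensional, irreducible and non-spherical (points (2) and (7) there); and item (\ref{parabolic}) follows from being $2$-dimensional and $(2,2)$-free, via the result of \cite{Blufstein} (point (5) there). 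Since $A_{\mathcal L}$ lies inside each of these families, the corollary follows.

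The argument is essentially bookkeeping, and the only point that requires attention — the main obstacle, such as it is — is making sure that each cited theorem has all of its hypotheses available asymptotically almost surely at once: combining $2$-dimensionality with $(2,2)$-freeness for the intersection-of-parabolics statement, and with irreducibility and non-sphericity for the trivial-centre and acylindrical-hyperbolicity statements. Theorem \ref{theorem:size_of_dimension2} and Lemmas \ref{lem:comparing_dim2_2_2free}, \ref{lem:being_reducible_uniformly small} and \ref{lem:interescting_big_class} supply precisely these ingredients, so no new ideas beyond those already developed are needed.
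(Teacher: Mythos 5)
Your proposal is correct and follows essentially the same route as the paper: intersect the asymptotically almost sure classes (irreducible, $2$-dimensional, $(2,2)$-free) via Theorem \ref{theorem:size_of_dimension2}, Lemma \ref{lem:comparing_dim2_2_2free}, Lemma \ref{lem:being_reducible_uniformly small} and repeated use of Lemma \ref{lem:interescting_big_class}, then read off the seven properties from the results cited in Conjecture \ref{Conj1to11}. Your explicit verification that a $2$-dimensional Artin group of rank at least $3$ is non-spherical is a point the paper leaves implicit, so it is a welcome (but not essentially different) addition.
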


\begin{proof}
By Theorem \ref{theorem:size_of_dimension2}, $A_{\Gamma}$ is asymptotically almost surely $2$-dimensional. Using Lemma \ref{lem:comparing_dim2_2_2free}, $A_{\Gamma}$ is also asymptotically almost surely $(2, 2)$-free. Using Lemma \ref{lem:being_reducible_uniformly small}, we also know that $A_{\Gamma}$ is asymptotically almost surely irreducible. Using Lemma \ref{lem:interescting_big_class} twice, this ensures that $A_{\Gamma}$ is asymptotically almost surely in the class
$$A_{\mathcal K} \coloneqq A_{Irr} \cap A_{\mathcal D} \cap A_{\mathcal B}.$$
 Note that the results given in Conjecture \ref{Conj1to11} for the points 1, 2, 3, 4, 5, 7 and 10 concern families of Artin groups that all contain $A_{\mathcal K}$. In particular, every Artin group of $A_{\mathcal K}$ satisfies the 7 points of this Corollary.   
\end{proof}
\noindent \textbf{What happens at } $f(N)=N^{3/2}$ \textbf{?}
\medskip 

Finding out the exact probability for an Artin group to be $2$-dimensional (or equivalently, $(2, 2)$--free) at $f(N)=N^{3/2}$ requires more work. In Theorem \ref{theorem:size_of_dimension2}, we gave an upper bound for this probability. The goal of the following lemma is to give an explicit formula for the value of $\mathbb P_f\big[A_\Gamma \in A_{\mathcal B} \big]$ at $f(N) = N^{3/2}$. Later, we give a conjecture on the exact value.
 
\begin{lemma}
\label{lem:proba_of_2,2_free}
	For all non-decreasing, divergent functions $f:\mathbf N \to \mathbf N$ we have that 
	$$ \mathbb P_f\big[ A_\Gamma \in A_{\mathcal B}\big] = \lim_{N \to \infty} \left(\frac{f(N)-1}{f(N)}\right)^{N \choose 2}\left(\sum_{k=1}^{\lfloor N/2 \rfloor}\frac{N!(f(N)-1)^{-k}}{(N-2k)!\,k! \,2^k}+1\right).$$
\end{lemma}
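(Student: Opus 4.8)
The plan is to establish the identity at finite $N$ by a direct enumeration and then pass to the limit. First I would recall from the proof of Theorem \ref{Theorem_N^2} that $\#\mathcal G^{N,f(N)} = f(N)^{\binom N2}$, since each of the $\binom N2$ pairs of vertices independently receives one of the $f(N)$ available coefficients. Thus the whole content reduces to counting $\#(\mathcal B \cap \mathcal G^{N,f(N)})$, the number of labellings of the complete graph on $N$ vertices that are $(2,2)$-free in the sense of Definition \ref{defn:2free}.

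The key observation is that a labelling lies in $\mathcal B$ precisely when the set of edges carrying the label $2$ contains no two adjacent edges; since two distinct edges of a simplicial graph are adjacent exactly when they share an endpoint, this says that the $2$-labelled edges form a matching in $K_N$. I would therefore stratify the count by the size $k$ of this matching, with $0 \le k \le \lfloor N/2\rfloor$. The number of matchings of size exactly $k$ in $K_N$ is $\binom{N}{2k}(2k-1)!! = \frac{N!}{(N-2k)!\,k!\,2^k}$ (choose the $2k$ endpoints, then pair them up). Once such a matching is fixed, each of the remaining $\binom N2 - k$ edges may carry any coefficient other than $2$, contributing $(f(N)-1)^{\binom N2 - k}$ choices, and every labelling so obtained is $(2,2)$-free; conversely every element of $\mathcal B \cap \mathcal G^{N,f(N)}$ arises exactly once this way. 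Hence
\[
\#\big(\mathcal B \cap \mathcal G^{N,f(N)}\big) = \sum_{k=0}^{\lfloor N/2\rfloor}\frac{N!}{(N-2k)!\,k!\,2^k}\,(f(N)-1)^{\binom N2 - k}.
\]

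Dividing by $f(N)^{\binom N2}$, pulling the common factor $\left(\frac{f(N)-1}{f(N)}\right)^{\binom N2}$ out of the sum, and separating off the $k=0$ term (which equals $1$), I obtain
\[
\mathbb P\big[\Gamma \in \mathcal B \mid \Gamma \in \mathcal G^{N,f(N)}\big] = \left(\frac{f(N)-1}{f(N)}\right)^{\binom N2}\left(1 + \sum_{k=1}^{\lfloor N/2\rfloor}\frac{N!\,(f(N)-1)^{-k}}{(N-2k)!\,k!\,2^k}\right).
\]
Taking $N\to\infty$ and recalling that $\mathbb P_f\big[A_\Gamma \in A_{\mathcal B}\big]$ is by definition this limit yields the claimed formula.

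There is no substantial obstacle here: the essential ingredient is just the combinatorial identity for the number of size-$k$ matchings of $K_N$, together with the bookkeeping of factoring the geometric-type term out of the sum and isolating the $k=0$ term. The only points requiring a little care are stating the equivalence ``$\Gamma\in\mathcal B$ $\iff$ the $2$-labelled edges form a matching'' correctly, and checking the degenerate small cases $N\le 2$ (where the sum is empty or reduces to its $k=0$ term, giving probability $1$), which in any case do not affect the limit.
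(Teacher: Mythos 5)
Your proof is correct and follows essentially the same route as the paper: stratify the $(2,2)$-free labellings by the number $k$ of $2$-labelled edges, note these must form a matching (so $k \le \lfloor N/2\rfloor$), count the matchings of size $k$ in $K_N$ as $\frac{N!}{(N-2k)!\,k!\,2^k}$, weight by $(f(N)-1)^{\binom N2 - k}$ for the remaining edges, divide by $f(N)^{\binom N2}$, and factor. The only cosmetic difference is that you compute the matching count as $\binom{N}{2k}(2k-1)!!$ while the paper obtains the same number as $\frac{1}{k!}\prod_{i=0}^{k-1}\binom{N-2i}{2}$.
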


\begin{proof}


	
	Let $E_k$ be the family of defining graphs that have exactly $k$ edges labelled by a $2$, and consider the associated family $A_{E_k}$ of Artin groups. Note that each edge is attached to two vertices, so by the pigeonhole principle, if $k > N/2$ then $ \mathbb P_f \big[ \Gamma \in \mathcal B \cap E_k \big]=0$. Hence
	
	$$\mathbb P\big[\Gamma \in \mathcal B \ \vert \ \Gamma \in \mathcal G^{N, f(N)}  \big] = \sum_{k=0}^{\lfloor N/2 \rfloor}\mathbb P \big[\Gamma \in \mathcal B \cap E_k \ \vert \Gamma \in \mathcal G^{N, f(N)} \big].  $$

	As usual, the total number of graphs in $\mathcal G^{N, f(N)}$ is $f(N)^{N \choose 2}$. On the other hand, we must compute how many of these graphs have exactly $k$ edges labelled by a $2$, while these edges are never adjacent.

	First of all, when $k=0$, we have $\mathbb P \big[\Gamma \in \mathcal B \cap E_k \ | \ \Gamma \in \mathcal G^{N, f(N)} \big]=\left(\frac{f(N)-1}{f(N)}\right)^{N \choose 2}.$
	

    For the case when $0 < k \leq \lfloor N/2 \rfloor$, we look at how many ways we have of placing the $k$ edges labelled by a 2. For the first such edge, we have ${N \choose 2}$ choices. The two vertices of the first edge must not appear in any other edge labelled by a $2$, so for the second edge we only have ${N-2 \choose 2}$ choices left. This goes on until the $k$-th edge labelled by a 2, for which we have ${N-2(k-1) \choose 2}$ choices. As the order in which we have chosen these edges do not matter, we must divide this product by $k!$. Now for the remaining ${N \choose 2}-k$ edges, we can use any label other than a $2$. Hence we multiply the previous product by $(f(N)-1)^{{N \choose 2}-k}$. Hence, for $0 < k \leq \lfloor N/2 \rfloor$, we have
$$\mathbb P \big[\Gamma \in \mathcal B \cap E_k \ | \ \Gamma \in \mathcal G^{N, f(N)} \big]=\frac{(f(N)-1)^{{N \choose 2}-k} \cdot \prod_{i=0}^{k-1} {N-2i \choose 2}}{f(N)^{N \choose 2} \cdot k!}.$$
	Therefore:
	 \begin{align*}
 \begin{split}
 	\mathbb P_f \big[A_\Gamma \in A_{\mathcal B} \big] &= \lim_{N \to \infty} \sum_{k=1}^{\lfloor N/2 \rfloor}\mathbb P \big[\Gamma \in \mathcal B \cap E_k \ | \ \Gamma \in \mathcal G^{N, f(N)} \big]+\mathbb P \big[ \Gamma \in \mathcal B \cap E_0 \ | \ \Gamma \in \mathcal G^{N, f(N)} \big]\\
 	&= \lim_{N \to \infty} \sum_{k=1}^{\lfloor N/2 \rfloor}\frac{(f(N)-1)^{{N \choose 2}-k} \cdot \prod_{i=0}^{k-1} {N-2i \choose 2}}{f(N)^{N \choose 2} \cdot k!}+\left(\frac{f(N)-1}{f(N)}\right)^{N \choose 2} \\
 	&= \lim_{N \to \infty} \left(\frac{f(N)-1}{f(N)}\right)^{N \choose 2}\left(\sum_{k=1}^{\lfloor N/2 \rfloor}\frac{N!(f(N)-1)^{-k}}{(N-2k)!\,k! \,2^k}+1\right) \\
 \end{split}	
 \end{align*}
 
 where we go from the second to the third line by noting that $$ \prod_{i=0}^{k-1} {N-2i \choose 2} = \frac{1}{2^k}N(N-1)(N-2) \hdots (N-2(k-1))(N-2(k-1)-1)=\frac{N!}{(N-2k)!2^k}.$$ \end{proof}
 
Now, by Lemma \ref{lem:comparing_dim2_2_2free} at $f(N) =N^{3/2}$ we have $ \mathbb P_f \big[ A_\Gamma \in A_{\mathcal D} \big]=\mathbb P_f\big[A_\Gamma \in A_{\mathcal B} \big]$, hence Lemma \ref{lem:proba_of_2,2_free} also holds for $\mathbb P_f\big[A_\Gamma \in A_{\mathcal D} \big]$.
We have computed this expression in $\textit{Python}$ for $N$ up to 190, which leads us to the following conjecture.
\begin{conjecture}
For $f(N)=N^{3/2}$ we have:
	$$\mathbb P_f \big[ A_\Gamma \in A_\mathcal B]= 1-e^{-1}.$$
In particular, we also have:
$$\mathbb P_f \big[ A_\Gamma \in A_\mathcal D \big]= 1-e^{-1}.$$
\end{conjecture}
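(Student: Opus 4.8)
The plan is to evaluate the limit in Lemma~\ref{lem:proba_of_2,2_free} directly at $f(N)=N^{3/2}$. Setting $p=1/f(N)=N^{-3/2}$, that formula reads $\mathbb P\big[\Gamma\in\mathcal B\mid\Gamma\in\mathcal G^{N,f(N)}\big]=(1-p)^{\binom N2}\sum_{k\ge0}m_k\,(f(N)-1)^{-k}$, where $m_k=\tfrac{N!}{(N-2k)!\,k!\,2^k}$ is exactly the number of matchings of size $k$ in $K_N$. This makes the probabilistic content transparent: if each of the $\binom N2$ edges is independently labelled $2$ with probability $p$, then $A_\Gamma$ is $(2,2)$--free precisely when the $2$--edges form a matching, and the number of $2$--edges is asymptotically Poisson with mean $\binom N2 p\sim\tfrac12\sqrt N$. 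The whole problem thus collapses to one sharp asymptotic estimate: the product of the Poisson--type prefactor $(1-p)^{\binom N2}$ with the matching generating polynomial $P_N:=\sum_k m_k\,(f(N)-1)^{-k}$, targeting the value $1-e^{-1}$.

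For the generating polynomial I would use the exact identity $\sum_k m_k\,y^k=\mathbb E\big[(1+\sqrt y\,Z)^N\big]$ for a standard Gaussian $Z$ (equivalently the Hermite representation $\sum_k m_k(-1)^kx^{N-2k}=\mathrm{He}_N(x)$), with $y=(f(N)-1)^{-1}\sim N^{-3/2}$. Writing $P_N=\tfrac1{\sqrt{2\pi}}\int(1+\sqrt y\,z)^Ne^{-z^2/2}\,dz$ and applying Laplace's method, the phase $N\log(1+\sqrt y\,z)-z^2/2$ has a single relevant saddle at $z^\ast\sim N^{1/4}$. Expanding there should give $P_N=\exp\!\big(\tfrac12\sqrt N+c+o(1)\big)$ for an explicit order--one constant $c$, while a careful expansion of $\log(1-p)$ gives $(1-p)^{\binom N2}=\exp\!\big(-\tfrac12\sqrt N+c'+o(1)\big)$. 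The two $\tfrac12\sqrt N$ terms cancel, so the limit is the single constant $e^{c+c'}$, and the entire task becomes pinning this constant down and showing it equals $1-e^{-1}$. By Lemma~\ref{lem:comparing_dim2_2_2free} (applicable since $N^{3/2}\succ N$), the same value then transfers verbatim from $A_{\mathcal B}$ to $A_{\mathcal D}$.

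The main obstacle is precisely this cancellation: because the prefactor and $P_N$ agree at the leading exponential order, the limit is controlled entirely by corrections that decay only like $N^{-1/4}$, so every term in the saddle expansion must be tracked to a subleading order that a first pass discards, and the substance of the conjecture is the exact identification of the surviving constant. The delicacy is visible on the probabilistic side: the number of vertices of degree $\ge2$ in the random $2$--edge graph has mean $\tfrac12$ but is \emph{not} automatically Poisson, since every pair of vertices shares a potential edge and the leading Chen--Stein term does not vanish. Consequently the tempting value $e^{-1/2}$ coming from a naive Poisson reading need not be the true limit, and the correlation between vertex degrees must be incorporated into the expansion. The crux of the proof is to carry out this refined, fully rigorous accounting of the $N^{-1/4}$--order contributions and to show that the constant it produces is exactly $1-e^{-1}$; this is also the regime where convergence is so slow that the numerical evidence up to $N=190$ is consistent with the stated value without on its own determining it.
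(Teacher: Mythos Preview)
There is nothing to compare against: in the paper this statement is a \emph{conjecture}, supported only by a Python evaluation of the formula of Lemma~\ref{lem:proba_of_2,2_free} for $N\le 190$. No proof is given or claimed.

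Your proposal is not a proof either but an outline whose decisive step---pinning down the limiting constant---is explicitly left undone. More importantly, the outline is aimed at the wrong target. The expression in Lemma~\ref{lem:proba_of_2,2_free} is exactly
\[
\mathbb P\bigl[G(N,p)\text{ is a matching}\bigr],\qquad p=f(N)^{-1}=N^{-3/2},
\]
for the Erd\H{o}s--R\'enyi graph, and this event is $\{Y=0\}$ where $Y$ is the number of paths of length two (``cherries''). The factorial moments of $Y$ are easy to control: the paper's own nine-case computation in Theorem~\ref{theorem:size_of_dimension2} already gives, at $f(N)=N^{3/2}$, that $\mathbb E[X]\to 1$ and $\mathbb E[X^2]/\mathbb E[X]^2\to 3$; since $X=2Y$ this means $\mathbb E[Y]\to\tfrac12$ and $\mathbb E[Y(Y-1)]\to\tfrac14$. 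The same bookkeeping for higher factorial moments (pairs of cherries sharing an edge contribute $O(N^4p^3)=O(N^{-1/2})$, etc.) yields $\mathbb E[(Y)_r]\to(1/2)^r$ for every $r$, so $Y\Rightarrow\mathrm{Poisson}(1/2)$ and hence
\[
\mathbb P_f\bigl[A_\Gamma\in A_{\mathcal B}\bigr]\;\longrightarrow\; e^{-1/2}\approx 0.6065,
\]
not $1-e^{-1}\approx 0.6321$. The value you dismiss as the ``naive Poisson reading'' is the correct one: your objection concerns the count of high-degree \emph{vertices}, but it is the cherry count that governs $\mathbb P[Y=0]$, and for cherries the Chen--Stein overlap terms do vanish.

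In short, the gap is not a missing estimate but the target itself: carried through, your saddle-point computation should produce $e^{-1/2}$, and the conjectured value $1-e^{-1}$ appears to be an artifact of the slow $O(N^{-1/2})$ convergence at $N\le 190$.
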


\section{Acylindrical hyperbolicity and centres.}

Two open questions in the study of Artin groups is whether all irreducible non-spherical Artin groups are acylindrically hyperbolic and have trivial centres (see Conjecture \ref{Conj1to11}). In this section, we study these two aspects of Artin groups for another family of Artin groups, that we will denote $A_{\mathcal{C}^C}$. The families of Artin groups studied in Section 2 and 3 are very large when $f(N)$ grows fast enough compared to $N$. While the spirit of this section resembles that of Sections 2 and 3, $A_{\mathcal{C}^C}$ will turn out to be very large when $f(N)$ grows slowly enough compared to $N$.


\begin{definition} 
A graph $\Gamma$ is said to be a cone if it has a join decomposition as a subgraph consisting of a single vertex $v_0$ and a subgraph $\Gamma'$ such that $ \Gamma =v_0 \ast \Gamma'$.
Let $\mathcal C$ be the class of defining graphs that are cones and $\mathcal C^C$ the class of defining graphs which are not cones.
\end{definition}

Recall that $Irr$ is the class of irreducible graphs. By \cite[Theorem 1.4]{KatoOguni}, we have that if $\Gamma$ has at least $3$ vertices, is irreducible and is not a cone then $A_\Gamma$ is acylindrically hyperbolic. Hence it suffices to find the probability that a random Artin group is irreducible and is not a cone.

\begin{proposition}
\label{prop:ah_cone}
For all $\alpha \in (0,1)$ and all non-decreasing functions $f(N) \prec N^{1-\alpha}$ we have that $$\mathbb P_f \big[ A_\Gamma \in A_{\mathcal C^C} \big]=1.$$
\end{proposition}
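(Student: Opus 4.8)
The plan is to show that the probability a random graph in $\mathcal{G}^{N, f(N)}$ \emph{is} a cone tends to $0$, which gives the result since we must also handle irreducibility, but by Lemma \ref{lem:being_reducible_uniformly small} the class $A_{Irr}$ is already uniformly large, so by Lemma \ref{lem:interescting_big_class} it suffices to bound $\mathbb{P}[\Gamma \in \mathcal{C} \mid \Gamma \in \mathcal{G}^{N, f(N)}]$. A vertex $v_0$ is a \textbf{cone point} of $\Gamma$ if $v_0$ is joined by an edge (with any finite label) to every other vertex; $\Gamma$ is a cone precisely when it has at least one cone point. For a fixed vertex $v_0$, the event that $v_0$ is a cone point requires each of the $N-1$ edges incident to $v_0$ to carry a label in $\{2, 3, \dots, f(N)\}$ rather than $\infty$; since each of these $N-1$ coefficients is chosen independently and uniformly from $f(N)$ possibilities, this event has probability $\left(\frac{f(N)-1}{f(N)}\right)^{N-1}$.

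First I would apply a union bound over the $N$ choices of potential cone point:
\begin{align*}
\mathbb{P}\big[\Gamma \in \mathcal{C} \mid \Gamma \in \mathcal{G}^{N, f(N)}\big]
\ \leq\ \sum_{v_0 \in V(\Gamma)} \mathbb{P}\big[v_0 \text{ is a cone point}\big]
\ =\ N \left(\frac{f(N)-1}{f(N)}\right)^{N-1}
\ =\ N\left(1 - \frac{1}{f(N)}\right)^{N-1}.
\end{align*}
Then I would show this tends to $0$ under the hypothesis $f(N) \prec N^{1-\alpha}$. Writing $\left(1 - \frac{1}{f(N)}\right)^{N-1} \leq \exp\!\left(-\frac{N-1}{f(N)}\right)$ and noting that $f(N) \prec N^{1-\alpha}$ means $f(N) = N^{1-\alpha}/h(N)$ for a non-decreasing divergent $h$, we get $\frac{N-1}{f(N)} \geq \frac{(N-1)h(N)}{N^{1-\alpha}} \gtrsim N^{\alpha} h(N)$, so the bound becomes at most $N \exp(-c N^{\alpha} h(N))$ for a suitable constant, which goes to $0$ since the exponential decay beats the linear factor $N$. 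Hence $\mathbb{P}_f[A_\Gamma \in A_{\mathcal{C}}] = 0$, i.e. $\mathbb{P}_f[A_\Gamma \in A_{\mathcal{C}^C}] = 1$, and combining with uniform largeness of $A_{Irr}$ via Lemma \ref{lem:interescting_big_class} gives the claim (and, via \cite[Theorem 1.4]{KatoOguni}, acylindrical hyperbolicity and triviality of the centre asymptotically almost surely, which is Theorem \ref{TheoremNThreshold}).

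The only subtlety — and the one place one must be slightly careful — is the interplay between $N$ and $h(N)$: the factor $N$ in front of the exponential is harmless because $N e^{-cN^{\alpha}h(N)} \to 0$ for any $\alpha > 0$ regardless of how slowly $h$ diverges, but one should remark that $\alpha$ being a \emph{fixed} positive constant (as opposed to, say, $\alpha = \alpha(N) \to 0$) is exactly what makes the argument work; this is why the statement fixes $\alpha \in (0,1)$ rather than merely assuming $f(N) \prec N$. I expect no real obstacle here — the estimate $N(1 - 1/f(N))^{N-1} \to 0$ is elementary — the main content is simply identifying the correct combinatorial description of cones (via cone points) and recognizing that the union bound over the $N$ candidate cone points is wasteful enough to still succeed.
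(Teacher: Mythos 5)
Your proof is correct and follows essentially the same route as the paper: a union bound over the $N$ candidate apex (cone) vertices, each contributing probability $\left(\frac{f(N)-1}{f(N)}\right)^{N-1}$, and then the estimate $N\left(1-\frac{1}{f(N)}\right)^{N-1}\to 0$ under $f(N)\prec N^{1-\alpha}$ (the paper extracts the limit via $\left(1-\frac{1}{f(N)}\right)^{f(N)}\to e^{-1}$ while you use $1-x\le e^{-x}$, which is the same estimate in different clothing). Note only that irreducibility is not needed for the proposition itself; it enters later in Corollary \ref{cor:cone_ah}.
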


\begin{proof}
Fix $\alpha \in (0,1)$ and $f(N) \prec N^{1-\alpha}$ a non-decreasing divergent function. Then, by definition, there exists a non-decreasing divergent function $h$ such that $f(N)h(N)=N^{1-\alpha}$.


By the definition of a cone and by a union bound, we get:
\begin{align*}
\begin{split}
	\mathbb P\big[ \Gamma \in \mathcal C \mid \Gamma \in \mathcal G^{N, f(N)}\big] &\leq \sum_{v_0 \in V(\Gamma)} \mathbb P \big[\forall u \in  V(\Gamma)-{v_0}  : m_{u,v_0} \neq \infty \mid \Gamma \in \mathcal G^{N, f(N)}  \big] \\
	&= \sum_{v_0 \in V(\Gamma)} \left(\frac{f(N)-1}{f(N)}\right)^{N-1} \\
	&= N\left(\frac{f(N)-1}{f(N)}\right)^{N-1} \\
	& = N\left(\left(\frac{f(N)-1}{f(N)}\right)^{f(N)}\right)^{h(N)N^\alpha}\left(\frac{f(N)}{f(N)-1}\right). \\
\end{split}	
\end{align*}

Thus: 
$$ \mathbb P_f\big[A_\Gamma \in A_{\mathcal{C}}\big] =\lim_{N \to \infty} \mathbb P\big[ \Gamma \in \mathcal{C} \mid  \Gamma \in \mathcal G^{N, f(N)}\big] =\lim_{N \to \infty} Ne^{-N^{\alpha}h(N)}=0.$$

Hence for $f(N) \prec N^{1-\alpha}$ we have $\mathbb P_f \big[ A_\Gamma \in A_{\mathcal{C}^C}\big]=1$, proving the proposition.
\end{proof}

\begin{corollary}
\label{cor:cone_ah}
Let $\alpha \in (0,1)$ and let $f(N) \prec N^{1-\alpha}$ be a non-decreasing divergent function. Then an Artin group picked at random (relatively to $f$) asymptotically almost surely is acylindrically hyperbolic and has a trivial centre.
\end{corollary}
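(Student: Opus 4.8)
The plan is to show that, when $f(N) \prec N^{1-\alpha}$, a random Artin group asymptotically almost surely lies in a class for which acylindrical hyperbolicity and triviality of the centre are already known, namely the class of irreducible Artin groups on at least three vertices whose defining graph is not a cone. The two external inputs I would quote are \cite[Theorem 1.4]{KatoOguni} (an irreducible Artin group on at least three vertices whose graph is not a cone is acylindrically hyperbolic) and the result of \cite{CMW} recorded in Conjecture \ref{Conj1to11}.(2) (an irreducible non-spherical Artin group whose graph is not the cone of a vertex has trivial centre).

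First I would collect the almost-sure ingredients. Proposition \ref{prop:ah_cone} gives $\mathbb{P}_f[A_\Gamma \in A_{\mathcal{C}^C}] = 1$, and Lemma \ref{lem:being_reducible_uniformly small} gives $\mathbb{P}_f[A_\Gamma \in A_{Irr}] = 1$. Two applications of Lemma \ref{lem:interescting_big_class} then yield $\mathbb{P}_f[A_\Gamma \in A_{Irr} \cap A_{\mathcal{C}^C}] = 1$. Since every graph in $\mathcal{G}^{N, f(N)}$ has $N$ vertices and the relevant limit is taken as $N \to \infty$, the hypothesis ``at least three vertices'' costs nothing, so \cite[Theorem 1.4]{KatoOguni} already gives that $A_\Gamma$ is asymptotically almost surely acylindrically hyperbolic.

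For the centre I would observe that $A_\Gamma$ is also asymptotically almost surely non-spherical. Indeed a spherical Artin group has a complete defining graph, since a finite Coxeter group contains no infinite dihedral subgroup and hence has all its coefficients finite; and a complete graph on at least two vertices is a cone. So non-sphericity is already implied by $\Gamma \in \mathcal{C}^C$ (equivalently, the family of spherical Artin groups sits inside the uniformly small family of type $FC$ Artin groups). Thus asymptotically almost surely $A_\Gamma$ is irreducible, non-spherical, and has a non-cone defining graph, and the result of \cite{CMW} gives that its centre is trivial.

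The step deserving the most care is the centre, not the acylindrical hyperbolicity: acylindrical hyperbolicity by itself only forces the centre to be finite, and torsion-freeness of $A_\Gamma$ is unavailable in this range of $f$, so one genuinely needs \cite{CMW}, and hence must verify irreducibility and non-sphericity on top of the non-cone property. One should also check that ``cone'' in the sense of Proposition \ref{prop:ah_cone} (a vertex joined by an edge to every other vertex) is the same notion as the ``cone of a single vertex'' in the statement of \cite{CMW}.
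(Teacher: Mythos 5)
Your proposal is correct and follows essentially the same route as the paper: combine Proposition \ref{prop:ah_cone} with Lemma \ref{lem:being_reducible_uniformly small} and Lemma \ref{lem:interescting_big_class} to get that $A_\Gamma$ is asymptotically almost surely irreducible with non-cone defining graph, then invoke \cite[Theorem 1.4]{KatoOguni} for acylindrical hyperbolicity and the result of \cite{CMW} for triviality of the centre. Your extra check that a non-cone graph forces non-sphericity (so the hypotheses of the \cite{CMW} statement as recorded in Conjecture \ref{Conj1to11}.(2) are genuinely met) is a small but welcome piece of care that the paper's proof passes over silently.
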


\begin{proof}
We note that by Lemma \ref{lem:being_reducible_uniformly small} and Lemma \ref{lem:interescting_big_class} we have  $\mathbb P_f\big[A_\Gamma \in A_{Irr} \cap A_{\mathcal C^C}\big]=  \mathbb P_f \big[ A_\Gamma \in A_{\mathcal{C}^C}\big]$. As we noted above, by \cite[Theorem 1.4]{KatoOguni}, if $\Gamma$ is irreducible and not a cone then $A_\Gamma$ is acylindrically hyperbolic. Hence, by Proposition \ref{prop:ah_cone}, for a function $f$ as in the statement of the Corollary, we get that a random Artin group (relatively to $f$) is asymptotically almost surely irreducible and a cone, hence asymptotically almost surely acylindrically hyperbolic.

Further, by \cite[Theorem 3.3]{CMW}, we have that if $\Gamma$ is irreducible and not a cone then $A_\Gamma$ has trivial centre. Hence a random Artin group (relatively to $f$) asymptotically almost surely has a trivial centre.
\end{proof}

Let $\alpha \in (0,1)$, by Corollary \ref{cor:cone_ah} and Corollary \ref{Coro7Conj}-\ref{AH},  we have shown that for all non-decreasing divergent functions $f$ such that either:
\begin{itemize}
    \item  $f(N) \prec N^{1-\alpha}$ 
    \item  $f(N) \succ N^{3/2}$
\end{itemize}
a random Artin group $A_\Gamma$ (relatively to $f$) is asymptotically almost surely acylindrically hyperbolic and has trivial centre. This motivates the following:

\medskip
\textbf{Question:} For which non-decreasing divergent functions $f$ do we have that a random Artin group (relatively to $f$) is asymptotically almost surely acylindrically hyperbolic and has trivial centre?\\

\bibliography{main}

\begin{thebibliography}{CMW19}

\bibitem[BHS17]{BHSC}
Jason Behrstock, Mark~F. Hagen, and Alessandro Sisto.
\newblock Thickness, relative hyperbolicity, and randomness in {C}oxeter
  groups.
\newblock {\em Algebr. Geom. Topol.}, 17(2):705--740, 2017.
\newblock With an appendix written jointly with Pierre-Emmanuel Caprace.

\bibitem[Blu21]{Blufstein}
Martin~Axel Blufstein.
\newblock Parabolic subgroups of two-dimensional artin groups and
  systolic-by-function complexes, 2021.

\bibitem[CD95a]{CharneyDavis2}
Ruth Charney and Michael~W. Davis.
\newblock The {$K(\pi,1)$}-problem for hyperplane complements associated to
  infinite reflection groups.
\newblock {\em J. Amer. Math. Soc.}, 8(3):597--627, 1995.

\bibitem[CD95b]{CharneyKPI}
Ruth Charney and Michael~W. Davis.
\newblock The {$K(\pi,1)$}-problem for hyperplane complements associated to
  infinite reflection groups.
\newblock {\em J. Amer. Math. Soc.}, 8(3):597--627, 1995.

\bibitem[CF12]{CharneyFarber}
Ruth Charney and Michael Farber.
\newblock Random groups arising as graph products.
\newblock {\em Algebr. Geom. Topol.}, 12(2):979--995, 2012.

\bibitem[CMV22]{cumplido_martin_vaskou_2022}
Mar\'{i}a Cumplido, Alexandre Martin, and Nicolas Vaskou.
\newblock Parabolic subgroups of large-type artin groups.
\newblock {\em Mathematical Proceedings of the Cambridge Philosophical
  Society}, page 1–22, 2022.

\bibitem[CMW19]{CMW}
Ruth Charney and Rose Morris-Wright.
\newblock Artin groups of infinite type: trivial centers and acylindrical
  hyperbolicity.
\newblock {\em Proc. Amer. Math. Soc.}, 147(9):3675--3689, 2019.

\bibitem[Dei20]{Deibel}
Angelica Deibel.
\newblock Random {C}oxeter groups.
\newblock {\em Internat. J. Algebra Comput.}, 30(6):1305--1321, 2020.

\bibitem[Hae22]{Haettel}
Thomas Haettel.
\newblock X{XL} type {A}rtin groups are {CAT}(0) and acylindrically hyperbolic.
\newblock {\em Ann. Inst. Fourier (Grenoble)}, 72(6):2541--2555, 2022.

\bibitem[HMS21]{HHGS}
Mark Hagen, Alexandre Martin, and Alessandro Sisto.
\newblock Extra-large type artin groups are hierarchically hyperbolic, 2021.

\bibitem[HO19]{systolic}
Jingyin Huang and Damian Osajda.
\newblock Metric systolicity and two-dimensional {A}rtin groups.
\newblock {\em Math. Ann.}, 374(3-4):1311--1352, 2019.

\bibitem[HO20]{huangosajda}
Jingyin Huang and Damian Osajda.
\newblock Large-type {A}rtin groups are systolic.
\newblock {\em Proc. Lond. Math. Soc. (3)}, 120(1):95--123, 2020.

\bibitem[KO22]{KatoOguni}
Motoko Kato and Shin-ichi Oguni.
\newblock Acylindrical hyperbolicity of artin groups associated with graphs
  that are not cones, 2022.

\bibitem[Mar22]{Martin}
Alexandre Martin.
\newblock The {T}its alternative for two-dimensional {A}rtin groups and
  {W}ise's {P}ower {A}lternative, 2022.

\bibitem[Vas22a]{Vaskou}
Nicolas Vaskou.
\newblock Acylindrical hyperbolicity for {A}rtin groups of dimension 2.
\newblock {\em Geom. Dedicata}, 216(1):Paper No. 7, 28, 2022.

\bibitem[Vas22b]{VaskouFreeofinfty}
Nicolas Vaskou.
\newblock Rigidity and automorphisms of large-type artin groups, 2022.

\bibitem[vdL83]{van1983homotopy}
H.~van~der Lek.
\newblock {\em The homotopy type of complex hyperplane complements}.
\newblock Katholieke Universiteit te Nijmegen, 1983.

\end{thebibliography}
\bibliographystyle{alpha}

\end{document}